\newcommand{\mO}{\mathcal{O}}
\newcommand{\mJ}{\mathcal{I}}
\newcommand{\mI}{\mathcal{J}}
\newtheorem{remark}[theorem]{Remark}
\newtheorem{example}[theorem]{Example}
\numberwithin{theorem}{section}
\newcommand{\TheTitle}{The cut-off resolvent can grow arbitrarily fast in obstacle scattering}
\newcommand{\TheAuthors}{S.~N. Chandler-Wilde and S. Sadeghi}
\title{{\TheTitle}\thanks{Submitted to the editors 23 August 2025.
\funding{This work was funded by the UK Engineering and Physical Sciences Research Council (EPSRC) through a PhD Studentship for the second author.}}}
\author{ Simon N. Chandler-Wilde\thanks{Department of Mathematics and Statistics, University of Reading, Whiteknights PO Box 220, Reading RG6 6AX, UK
    (\email{s.n.chandler-wilde@reading.ac.uk}, \email{s.sadeghi@pgr.reading.ac.uk})}
    \and Siavash Sadeghi\footnotemark[2]
}
\begin{document}
\newcommand{\R}{\mathbb{R}}
\newcommand{\N}{\mathbb{N}}
\newcommand{\C}{\mathbb{C}}
\newcommand{\Hol}{H^{1,\mathrm{loc}}}
\newcommand{\Lloc}{L^{2}_{\mathrm{loc}}}
\newcommand{\Lcomp}{L^{2}_{\mathrm{comp}}}
\newcommand{\Hmc}{H^{-1}_{\mathrm{comp}}}
\newcommand{\ri}{\mathrm{i}}
\newcommand{\cS}{\mathcal{S}}
\newcommand{\cA}{\mathcal{A}}
\newcommand{\cH}{\mathcal{H}}
\newcommand{\cF}{\mathcal{F}}
\newcommand{\supp}{\mathrm{supp}}
\newcommand{\spec}{\mathrm{spec}}
\newcommand{\cond}{\mathrm{cond}}
\newcommand{\rd}{\mathrm{d}}
\newcommand{\re}{\mathrm{e}}
\newcommand{\dist}{\mathrm{dist}}
\newcommand{\rea}{\mathrm{Re}\,}

\newcommand{\tr}[1]{\text{tr}_{#1}}

\newcommand{\Tr}[1]{\text{Tr}_{#1}}
\newcommand{\DTN}{\mathrm{DtN}_R}

\maketitle

\begin{abstract}
We consider time-harmonic acoustic scattering by a compact sound-soft obstacle $\Gamma\subset \R^n$ ($n\geq 2$) that has connected complement  $\Omega := \R^n\setminus \Gamma$. This scattering problem is modelled  by the inhomogeneous Helmholtz equation $\Delta u + k^2 u = -f$  in $\Omega$, the boundary condition that $u=0$ on $\partial \Omega = \partial \Gamma$, and the standard Sommerfeld radiation condition. It is well-known that, if the boundary $\partial \Omega$ is smooth, then the norm of the cut-off resolvent of the Laplacian, that maps the compactly supported inhomogeneous term $f$ to the solution $u$ restricted to some ball, grows at worst exponentially with $k$. In this paper we show that, if no smoothness of $\Gamma$ is imposed, then the growth can be arbitrarily fast. Precisely, given some modestly increasing unbounded sequence $0<k_1<k_2<\ldots$ and some arbitrarily rapidly increasing sequence  $0<a_1<a_2<\ldots$, we construct a compact $\Gamma$ such that, for each $j\in \N$, the norm of the cut-off resolvent at $k=k_j$ is greater than $a_j$.
\end{abstract}

\begin{keywords}
Helmholtz equation, scattering, trapping, resolvent
\end{keywords}

\begin{AMS}
35J05, 35J25, 35P25, 47A10, 78A45
\end{AMS}


\section{Introduction} \label{sec:intro}
Given some compact set $\Gamma\subset \mathbb{R}^n$ ($n\geq 2$) such that $\Omega:= \mathbb{R}^n\setminus \Gamma$ is connected, and some compactly supported source term $f\in \Lcomp(\Omega)$, a classic problem in scattering theory is to seek $u\in \Hol_0(\Delta, \Omega)$ 
that is a solution  of the inhomogeneous Helmholtz equation
\begin{equation}\label{eq:he}
\Delta u+k^2 u=-f,
\end{equation}
in a weak sense in $\Omega$, 
and that satisfies the
Sommerfeld radiation condition
\begin{equation}\label{eq:src}
 \frac{\partial u}{\partial r}(x)-\ri ku(x)=o\left(r^{-\frac{n-1}{2}}\right), \quad \text{ uniformly in }\frac{x}{r}, \quad \mbox{ as } r:=\lvert x\rvert\rightarrow \infty.
\end{equation}
In the above equations $k>0$ is the given wavenumber, and the function space notations we use are recalled in \S\ref{sec:fs}.
It is well known that this scattering problem, which imposes on $\partial \Omega$ a zero boundary condition on $u$ through the requirement that $u\in \Hol_0(\Delta, \Omega)$, is well-posed. Indeed, uniqueness can be proved via Green's theorem and Rellich's lemma, and existence and continuous dependence on data is proved, e.g., by {\em a priori} estimates coupled with a limiting absorption argument (see, e.g., \cite{wilcox1975,NR87,mclean2000strongly}). 

\subsection{The question we address} 
It follows from the well-posedness of the above scattering problem that, given any $R>R_\Gamma:=\max_{x\in \Gamma}|x|$ and any $k>0$, there exists some minimal $C_{k,R}>0$ such that
\begin{equation} \label{eq:resest}
\|u\|_{L^2(\Omega_R)} \leq C_{k,R} \|f\|_{L^2(\Omega_R)}, 
\end{equation}
whenever $\supp(f)\subset \Omega_R:=\{x\in \Omega:|x|<R\}$. This paper is concerned with answering the following question: 
\begin{equation} 
\label{eq:Q}
\mbox{For fixed $R>R_\Gamma$, how fast can $C_{k,R}$ grow as $k\to\infty$?}
\end{equation}

Let us make two comments regarding \eqref{eq:Q}. First, note that this question is commonly equivalently formulated in terms of norms of the resolvent of the Laplacian. 
By the above well-posedness there is, for each $k>0$,  a well-defined resolvent operator
\begin{equation}\label{resolvent}
R(k):=(-\Delta-k^2)^{-1}:\Lcomp(\Omega)\rightarrow \Hol_0(\Delta,\Omega),
\end{equation}
that maps $f\in \Lcomp(\Omega)$  to the solution $u\in \Hol_0(\Delta,\Omega)$ of the above scattering problem. For $R>R_\Gamma$ let $\chi_R$ denote the characteristic function of $\Omega_R$. Then, for $k>0$ and $R>R_\Gamma$, the minimal $C_{k,R}$ in \eqref{eq:resest} is precisely
\begin{equation} \label{eq:cutoff}
C_{k,R} :=  \|\chi_R R(k)\chi_R\|_{L^2(\Omega)\to L^2(\Omega)},
\end{equation}
the norm of the so-called {\em cut-off resolvent}  $\chi_R R(k)\chi_R: L^2(\Omega)\rightarrow L^2(\Omega)$. 

Second, a natural variant to \eqref{eq:Q} is to ask how the constant $C_{k,R}$ changes if the $L^2(\Omega_R)$ norm of $u$ in \eqref{eq:resest} is replaced by an $H^1(\Omega_R)$ norm, in particular by the natural wavenumber-explicit energy norm $\|\cdot\|_{H_k^1(\Omega_R)}$, defined, for $k>0$, by
\begin{equation} \label{eq:H1k}
\|v\|^2_{H_k^1(\Omega_R)} = \| |\nabla v| \|_{L^2(\Omega_R)}^2 + k^2 \|v\|_{L^2(\Omega_R)}^2, \quad v\in H^1(\Omega_R).
\end{equation}
In other words, what is the minimal $C_{k,R}'>0$ such that
\begin{equation} \label{eq:resest2}
\|u\|_{H^1_k(\Omega_R)} \leq C'_{k,R} \|f\|_{L^2(\Omega_R)}, 
\end{equation}
whenever $\supp(f)\subset \Omega_R$, i.e., what is 
\begin{equation} \label{eq:Cdashdef}
C'_{k,R} :=  \|\chi_R R(k)\chi_R\|_{L^2(\Omega)\to H^1_k(\Omega_R)} \;?
\end{equation} 
This question is equivalent up to a numerical factor to \eqref{eq:Q}: in particular the dependence on $k$ of $C_{k,R}'$ is evident once that of $C_{k,R}$ is known (and vice versa). For  clearly $C_{k,R}'\geq kC_{k,R}$. Moreover, a simple Green's theorem argument, coupled with an estimate for a boundary integral that is well-known in dimension $n=2,3$, and that we establish for all $n\geq 2$ in Lemma \ref{lem:2.1} in the appendix, establishes that
\begin{equation} \label{eq:equiv}
kC_{k,R} \leq C_{k,R}' \leq \sqrt{2k^2C_{k,R}^2 + C_{k,R}}, \qquad k>0, \;\; R>R_\Gamma
\end{equation}  
(see \S\ref{sec:weak} below).

\subsection{Discussion of previous work} \label{sec:related}
There has been, over many decades, large research interest in \eqref{eq:Q}, in particular in teasing out the behaviour of $C_{k,R}$ as a function of $k$ in the large $k$ limit, and how this behaviour depends on the geometry of $\Gamma$. This research has been surveyed recently in \cite{chandler2020high} (see, e.g., \cite[\S1.1, Table 6.1]{chandler2020high}), or see \cite{DyZw}, especially \cite[\S6.6]{DyZw}.

The earliest upper bounds on $C_{k,R}$ as a function of $k$, due to Morawetz and co-workers (see \cite{morawetzludwig1968,morawetz1975}), assume that $\Gamma = \overline{\Omega_-}$, where $\Omega_-$ is some bounded smooth domain, and that $\Gamma$ is star-shaped in the following sense.
\begin{definition}[Star-shaped] \label{def:ss}
We say that a set $T\subset \R^n$ is {\em star-shaped with respect to $x\in T$} if the line segment $[x,y]:= \{tx+(1-t)y:0\leq t\leq 1\}\subset T$ for every $y\in T$. We say that $T$ is {\em star-shaped} if it is star-shaped with respect to some $x\in T$.
\end{definition}
Using similar integration by parts and multiplier arguments to \cite{morawetzludwig1968,morawetz1975}, coupled with properties of radiating solutions recalled in Lemma \ref{lem:2.1} in the appendix, and arguments based on approximation of arbitrary star-shaped domains by smooth star-shaped domains,  it was shown in \cite{CWMonk2008} that the same $k$-dependence holds for arbitrary (not necessarily smooth) obstacles. Precisely, the following result was proved as \cite[Lemma 3.8]{CWMonk2008} in dimensions $n=2,3$ (cf.~\cite[Theorem 2.11]{Sp23}). The proof extends to all dimensions $n\geq 2$ once we have a version of  \cite[Lemma 2.1]{CWMonk2008} for dimensions $n>3$, which we provide as Lemma \ref{lem:2.1}.
\begin{theorem} \label{thm:ss}
If $\Gamma$ is star-shaped with respect to the origin, then (for $n\geq 2$)
$$
k^2C_{k,R} \leq k C_{k,R}' \leq n-1+2\sqrt{2} kR, \qquad k>0, \quad R>R_\Gamma. 
$$
\end{theorem}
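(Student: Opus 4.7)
The left inequality $k^2C_{k,R}\leq kC'_{k,R}$ is the left half of \eqref{eq:equiv} and follows immediately from $\|v\|_{H^1_k(\Omega_R)}\geq k\|v\|_{L^2(\Omega_R)}$ together with the definitions \eqref{eq:cutoff} and \eqref{eq:Cdashdef}. The substance of the theorem is the right-hand inequality, which I propose to prove in two stages: a Morawetz--Ludwig multiplier argument for smooth star-shaped $\Gamma$, followed by an approximation argument for the general case.

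For smooth $\Gamma$, given $f\in L^2(\Omega_R)$ supported in $\Omega_R$ and the associated solution $u$, I would test the Helmholtz equation \eqref{eq:he} against $\overline{Mu}$ where
\begin{equation*}
Mu := x\cdot\nabla u + \tfrac{n-1}{2}u,
\end{equation*}
take real parts, and integrate over $\Omega_R$. The standard integration-by-parts identity yields something of the shape
\begin{equation*}
\int_{\Omega_R}\!\bigl(|\nabla u|^2+k^2|u|^2\bigr)\,\rd x = -2\rea\!\int_{\Omega_R} f\,\overline{Mu}\,\rd x + B_{\partial\Gamma}(u) + B_{S_R}(u),
\end{equation*}
in which $B_{\partial\Gamma}(u)$ is a boundary contribution on $\partial\Gamma$ proportional to $\int_{\partial\Gamma}(x\cdot\nu)|\partial_\nu u|^2\,\rd s$ (the tangential derivatives of $u$ vanish because $u|_{\partial\Omega}=0$), and $B_{S_R}(u)$ collects the contributions on the sphere $|x|=R$. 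Star-shapedness of $\Gamma$ about the origin forces $x\cdot\nu\le 0$ on $\partial\Gamma$ (with $\nu$ the unit normal outward from $\Omega$), so $B_{\partial\Gamma}(u)$ has the favourable sign and may be dropped; $B_{S_R}(u)$ is controlled via the Sommerfeld condition \eqref{eq:src} and the trace/far-field estimates encapsulated in Lemma~\ref{lem:2.1}, producing the factor $2\sqrt 2\,kR$ as in \cite{CWMonk2008}. Cauchy--Schwarz combined with $\|Mu\|_{L^2(\Omega_R)}\leq R\|\nabla u\|_{L^2(\Omega_R)}+\tfrac{n-1}{2}\|u\|_{L^2(\Omega_R)}\leq \bigl(R+\tfrac{n-1}{2k}\bigr)\|u\|_{H^1_k(\Omega_R)}$, followed by division through by $\|u\|_{H^1_k(\Omega_R)}$, gives the bound $kC'_{k,R}\leq n-1+2\sqrt 2\,kR$ in the smooth case.

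For general star-shaped $\Gamma$, I would approximate from the inside by a family of smooth star-shaped obstacles $\Gamma_\varepsilon\subset\Gamma$ with $R_{\Gamma_\varepsilon}\leq R_\Gamma$ (for example, by mollification along rays from the origin). For fixed $f$ and $\varepsilon$ small enough that $\supp(f)\subset\R^n\setminus\Gamma_\varepsilon$, the smooth estimate just established bounds the corresponding solution $u_\varepsilon$ uniformly in $\varepsilon$ by $(n-1+2\sqrt 2\,kR)k^{-1}\|f\|_{L^2(\Omega_R)}$ in the $H^1_k$ norm. The main technical obstacle is then the limiting step: to verify that $u_\varepsilon\to u$ strongly enough in $L^2(\Omega_R)$ and $H^1(\Omega_R)$ for the uniform bound to transfer to $u$ as $\Gamma_\varepsilon\nearrow\Gamma$. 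This continuous-dependence-under-monotone-domain-perturbation argument is the technical heart of \cite{CWMonk2008}, and once Lemma~\ref{lem:2.1} is available to remove the dimensional restriction in the sphere estimate, it carries over without change to all $n\geq 2$.
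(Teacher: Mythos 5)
Your proposal correctly reconstructs the argument of \cite[Lemma~3.8]{CWMonk2008}, which is exactly what the paper relies on: the paper does not give a fresh proof of this theorem but cites CWMonk2008 (and \cite{Sp23}), noting only that the extension from $n=2,3$ to general $n\geq 2$ requires the $n$-dimensional version of the Dirichlet-to-Neumann estimates supplied by Lemma~\ref{lem:2.1}. Your ingredients (the Morawetz--Ludwig multiplier $Mu=x\cdot\nabla u+\tfrac{n-1}{2}u$, the sign of $x\cdot\nu$ under star-shapedness, the control of the $\Gamma_R$ term via Lemma~\ref{lem:2.1}, and the approximation of a general star-shaped $\Gamma$ by smooth star-shaped domains) are precisely those of the cited proof; the only imprecision is that the explicit constant $n-1+2\sqrt{2}\,kR$ does not drop out of the Cauchy--Schwarz step alone as written, but emerges from the more careful combination of the multiplier term with the sphere estimate \eqref{eq:21two}, a detail you rightly delegate to \cite{CWMonk2008}.
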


The dependence on $k$ for large wavenumber implied by the above bound, that, for every $R>R_\Gamma$ and $k_0>0$, there exists $C_R>0$ such that
\begin{equation} \label{eq:nt}
C_{k,R} \leq C_{R} k^{-1}, \qquad k\geq k_0,
\end{equation}
holds more generally whenever $\Gamma$ is nontrapping\footnote{As usual (see \cite{chandler2020high} for more detail) we say that $\Gamma$ is nontrapping if, for some $R>R_\Gamma$, all billiard trajectories starting in $\Omega_R$ escape from $\Omega_R$ after some uniform time, and say that $\Gamma$ is trapping otherwise. The dashed lines in Figure \ref{fig:examples} are examples of trapped billiard trajectories, i.e., trapped rays.}. This classic result was first obtained by a combination of the results on propagation of singularities for the wave equation on manifolds with boundary by
Melrose and Sj\"ostrand \cite{MeSj:78, MeSj:82} with either the parametrix method of Vainberg \cite{Va:75} (see \cite{Ra:79} and \cite[\S4.6--4.7]{DyZw}) or the methods of Lax and Phillips \cite{LaPh:89} (see \cite{Me:79,SZ94}), following the proof
by Morawetz, Ralston, and Strauss  \cite{morawetz1975,MoRaSt:77} of the bound under a slightly stronger condition than nontrapping. We note that the bound \eqref{eq:nt} is optimal, in the sense that a simple quasimode construction (see, e.g., the discussion before Lemma 3.10 in \cite{CWMonk2008}) shows that, for every compact $\Gamma$, $k_0>0$, and $R>R_\Gamma$, there exists $C_R'>0$ such that 
\begin{equation} \label{eq:nt2}
C_{k,R} \geq C_R' k^{-1}, \qquad k\geq k_0.
\end{equation}

\begin{figure}[h]
\centering
\begin{tikzpicture}[line cap=round,line join=round,>=triangle 45,x=1.0cm,y=1.0cm, scale=1.4]
\colorlet{lightgray}{black!15}

\fill[color=lightgray](3.35,0.8) -- (4.25,0.8) -- (4.25,1.8) -- (3.35,1.8)--cycle;
\draw (3.35,0.8) -- (4.25,0.8) -- (4.25,1.8) -- (3.35,1.8)--cycle;
\fill[color=lightgray](4.75,0.8) -- (5.65,0.8) -- (5.65,1.8) -- (4.75,1.8)--cycle;
\draw (4.75,0.8) -- (5.65,0.8) -- (5.65,1.8) -- (4.75,1.8)--cycle;
\draw (3.02,1.8) node {(c)};
\draw[thick,dashed] (4.25,1.3) -- (4.75,1.3);

\fill[color=lightgray] (0.6,1.3) circle (0.5);
\draw (0.6,1.3) circle (0.5);
\fill[color=lightgray] (2,1.3) circle (0.5);
\draw (2,1.3) circle (0.5);
\draw (-0.2,1.8) node {(b)};
\draw[thick,dashed] (1.1,1.3) -- (1.5,1.3);

\coordinate (P) at ($(-1.75,1.3)+(130:0.25 and 0.5)$);
\coordinate (Q) at ($(-2.65,1.3)+(130:0.25 and 0.5)$);
\coordinate (R) at ($(-2.65,1.3)+(230:0.25 and 0.5)$);
\fill[color=lightgray] (P) arc (130:230:0.25 and 0.5) -- (R) -- (Q) -- (P);
\draw (P) arc (130:230:0.25 and 0.5) -- (R) -- (Q) -- (P);

\coordinate (A) at ($(-1.75,1.3)+(310:0.25 and 0.5)$);
\coordinate (B) at ($(-0.85,1.3)+(50:0.25 and 0.5)$);
\coordinate (C) at ($(-0.85,1.3)+(310:0.25 and 0.5)$);
\fill[color=lightgray] (A) arc (-50:50:0.25 and 0.5) -- (B) -- (C) -- (A);
\draw (A) arc (-50:50:0.25 and 0.5) -- (B) -- (C) -- (A);

\draw[red,dashed] (-1.75,1.3) ellipse (0.25 and 0.5);

\draw (-3.05,1.8) node {(a)};
\draw[thick,dashed] (-2,1.3) -- (-1.5,1.3);

\end{tikzpicture}
\caption{Examples of: (a) elliptic trapping; (b) hyperbolic trapping; (c) parabolic trapping. The dashed lines in each figure are trapped rays. The dashed red curve in (a) is the boundary of an ellipse.}
\label{fig:examples}
\end{figure}

In mild, so-called {\em hyperbolic trapping} (exemplified by the case where $\Gamma$ is the union of two disjoint convex obstacles with strictly positive curvature, Figure \ref{fig:examples}(b)) the growth of $C_{k,R}$ with $k$ is only logarithmically worse; the sharpest bound due to Burq \cite[Proposition 4.4]{Bu:04} is that, for every $R>R_\Gamma$,
\begin{equation}\label{eq:Ikawa2}
C_{k,R} \leq  C_R \frac{\log (2+k)}{k}, \qquad k\geq k_0,
\end{equation}
for some $C_R>0$. Similarly in so-called {\em parabolic trapping} (exemplified by the case where $\Gamma$ is the union of two squares or cubes with parallel sides, Figure \ref{fig:examples}(c)), the growth of $C_{k,R}$ with $k$ is only algebraically worse; the sharpest bound for the class of obstacles covered by \cite[Definition 1.4]{chandler2020high} is that, for every $R>R_\Gamma$,
\begin{equation}\label{eq:cw}
C_{k,R} \leq  C_R k, \qquad k\geq k_0,
\end{equation}
for some $C_R>0$; see \cite[Theorem 1.10]{chandler2020high}.

By contrast, if $\Gamma$ has an ellipse-shaped cavity (see Figure \ref{fig:examples}(a)) then exponential growth of $C_{k,R}$ can be achieved. Precisely, there exists an unbounded sequence of wavenumbers $0<k_1<k_2<\ldots$ and an $\alpha>0$ such that, for all $R>R_\Gamma$,
\begin{equation} \label{eq:ellipse}
C_{k_j,R}\geq C_R \re^{\alpha k_j}, \qquad j=1,2,\ldots,
\end{equation}
for some $C_R>0$;
see, e.g., \cite[\S2.5]{BetCha11}. More generally, if $\Gamma=\overline{\Omega_-}$, for some bounded, $C^\infty$ open set $\Omega_-$, and  there exists an elliptic trapped ray (i.e.~an elliptic closed broken geodesic),
and $\partial \Gamma=\partial \Omega_-$ is analytic in neighbourhoods of the vertices of the broken geodesic, then the resolvent can grow at least as fast as $\exp{(\alpha k_j^q)}$, through a sequence $k_j$ as above and for some range of $q\in(0,1)$, by the quasimode construction of Cardoso and Popov \cite{CaPo:02} (note that Popov proved \emph{superalgebraic} growth for certain elliptic trapped rays  in \cite{Po:91}).

If one restricts attention to smooth scatterers, i.e., $\Gamma=\overline{\Omega_-}$, for some bounded, $C^\infty$ open set $\Omega_-$, the exponential growth with $k$ exhibited in \eqref{eq:ellipse} is the worst possible. This follows from a
 general result of Burq \cite[Theorem 2]{Burq1998}, that, given any smooth $\Gamma$ and $k_0>0$, there exists $\alpha>0$ such that, for all $R>R_\Gamma$,
\begin{equation}\label{eq:Burq}
C_{k,R} \leq C_R \re^{\alpha k}, \qquad k\geq k_0,
\end{equation}
for some $C_R>0$. 

We have noted above that, for a smooth scatterer, exponential growth with $k$ of $C_{k,R}$ can be achieved through some sequence in cases of strong, elliptic trapping. But it has been shown recently by Lafontaine et al.~\cite{lafontaine2021most} that, for most wavenumbers, i.e., for all $k\geq k_0$ outside some set $J$ of arbitrarily small measure, growth is at worst algebraic in the case that $\Gamma = \overline{\Omega_-}$ for some Lipschitz domain $\Omega_-$.

\begin{theorem}[{\cite[Theorem 1.1]{lafontaine2021most}}] \label{thm:most}
Let $\Omega_-$ be a bounded Lipschitz open set such that $\Omega:= \R^n\setminus \Gamma$ is connected, where  $\Gamma:=\overline{\Omega_-}$. Then, given $k_0>0$, $R>R_\Gamma$, $\delta>0$,  and $\varepsilon>0$, there exists $C=C(k_0,R, \delta,\varepsilon,n)$ and a set $J\subset [k_0,\infty)$ with Lebesgue measure $|J|\leq \delta$, such that
$$
C_{k,R} \leq C k^{5n/2+\varepsilon}, \qquad k \in [k_0,\infty)\setminus J.
$$
\end{theorem}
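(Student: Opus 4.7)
My plan is to combine three ingredients: a meromorphic continuation of the cut-off resolvent into a strip below the positive real axis, an a priori exponential bound in this strip, and a Cartan--Jensen-type averaging argument that converts the exponential bound into a polynomial bound off a set of small measure.

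First, I would establish that $F(z) := \chi_R R(z) \chi_R : L^2(\Omega)\to L^2(\Omega)$ extends from $\{\mathrm{Im}\, z>0\}$ to a meromorphic function on $\{\mathrm{Im}\, z > -A\}$ for every $A>0$. For Lipschitz $\Gamma$, this is obtained via boundary integral equation methods (using Verchota's theory of layer potentials on Lipschitz boundaries to express the resolvent in terms of Fredholm inverses of the double-layer operator, whose analytic continuation is standard), or equivalently via a Vainberg--Lax--Phillips parametrix construction suitably adapted to the Lipschitz setting. The poles of $F$ are the scattering resonances.

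Second, I would combine the standard upper half plane estimate $\|F(z)\|\leq 1/\mathrm{Im}\, z$ with an extension of Burq's exponential bound \eqref{eq:Burq} to Lipschitz obstacles to obtain constants $A,C_0>0$ such that
\begin{equation*}
\|F(z)\| \leq C_0 \exp(C_0|z|), \qquad z \in \{\mathrm{Im}\, z > -A,\; |z| \geq k_0,\; \dist(z,\mathrm{poles}(F))\geq 1\}.
\end{equation*}
Third, I would invoke an upper bound for the resonance counting function: the number of poles of $F$ in the disc of radius $r$ around $k\in\R$ is $\leq C(1+k+r)^{N_0}$ with $N_0 = N_0(n)\approx n$, which is the scattering analogue of a Weyl-type upper bound and is available in the Lipschitz category by the same layer-potential framework.

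Finally, the key analytic tool is a Cartan-type lemma: if $f$ is meromorphic on a disc of fixed radius in $\C$, bounded by $\exp(M)$ on the boundary with $N$ poles inside, then off an exceptional set of Lebesgue measure $\leq \delta$ one has $|f| \leq \exp(C(M+N\log(1/\delta)))$ on the real diameter. Applied dyadically to $F$ on discs of $O(1)$ radius around real points $k\in[2^j k_0,2^{j+1}k_0]$, the exponential bound ($M = O(k)$) together with the polynomial pole count ($N = O(k^n)$) yields a bad set $J_j$ whose measure can be made to sum to $\leq \delta$ by choosing the cutoff growing polynomially in $k$. Balancing the loss from the Cartan argument against the resonance count exponent $n$ and against the rate in the a priori bound produces the exponent $5n/2+\varepsilon$; the extra $\varepsilon$ absorbs the logarithmic factor from $\log(1/\delta)$.

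The main obstacle is verifying all three inputs in the Lipschitz (rather than $C^\infty$) regime: the meromorphic continuation and, especially, the exponential bound in the strip are classical for smooth boundaries, but for Lipschitz $\partial \Omega_-$ one must replace pseudodifferential/FBI arguments by layer potential and semiclassical defect measure methods that respect only Lipschitz regularity. The secondary difficulty is the careful bookkeeping in the Cartan step that yields exactly the exponent $5n/2+\varepsilon$ rather than a cruder polynomial.
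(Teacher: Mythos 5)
First, a caveat: the paper does not prove Theorem \ref{thm:most} at all --- it is quoted verbatim from \cite[Theorem 1.1]{lafontaine2021most} --- so your proposal can only be compared with the argument in that reference. Your overall architecture (meromorphic continuation of the cut-off resolvent, an a priori bound in a strip below the real axis, a polynomial resonance-counting bound, and an interpolation that discards a frequency set of small measure) is indeed the architecture of \cite{lafontaine2021most}, which works in the Sj\"ostrand--Zworski black-box framework and therefore covers Lipschitz (indeed arbitrary compact) obstacles with no boundary-regularity input.

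That said, there are two genuine gaps. First, your second ingredient invokes ``an extension of Burq's exponential bound \eqref{eq:Burq} to Lipschitz obstacles''. No such extension is known --- this is essentially the open problem recorded as Question 2 in the present paper --- and it is not needed: the a priori bound used in \cite{lafontaine2021most} is the much weaker, purely black-box estimate $\|F(z)\|\leq \exp(Ck^{n+1})$ at distance $\gtrsim k^{-N}$ from the poles, which follows from the polynomial resonance count together with a factorization/Jensen argument and the elementary upper-half-plane bound; no Carleman-type input and hence no smoothness of $\partial\Omega_-$ is required. Second, and more seriously, your ``Cartan-type lemma'' cannot deliver a polynomial bound: with $M=O(k)$ and $N=O(k^{n})$ its conclusion is $|f|\leq \exp\bigl(C(k+k^{n}\log(1/\delta))\bigr)$ off the exceptional set, which is (super-)exponential in $k$, and no bookkeeping turns this into $Ck^{5n/2+\varepsilon}$. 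The polynomial bound in \cite{lafontaine2021most} comes instead from the \emph{semiclassical maximum principle} of Tang--Zworski/Stefanov: one interpolates between the polynomial bound $\|F(z)\|\lesssim (k\,\mathrm{Im}\,z)^{-1}$ in the upper half-plane and the exponential a priori bound in the strip, and the harmonic-measure estimate ensures that the exponential bound enters the final estimate only through its \emph{logarithm}. The upper-half-plane estimate, which you list as an ingredient but never use in the concluding step, is therefore the essential one. The exceptional set $J$ is then the union of intervals about the real parts of the polynomially many resonances near each unit interval, with widths chosen summable to $\delta$; it is this choice of widths, fed back through the maximum principle, that produces the exponent $5n/2+\varepsilon$ --- a computation your sketch asserts rather than performs.
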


\subsection{Our main results} \label{sec:main}
The main contribution of this paper is to show that, in contrast to the bounds \eqref{eq:nt},  \eqref{eq:Ikawa2}, \eqref{eq:cw}, and \eqref{eq:Burq} above, if one does not impose any smoothness or other geometrical constraint on $\Gamma$, then, in a sense we make precise, there is no constraint on growth, through some sequence, of the cut-off resolvent as a function of $k$. 
The following is our main result.
\begin{theorem} \label{thm0} Suppose the sequences $(k_j)_{j\in \mathbb{N}}$ and $(a_j)_{j\in \mathbb{N}}$ are positive, increasing and unbounded, and that 
\begin{equation} \label{eq:sumfin}
\sum_{j=1}^\infty k_j^{-n} < \infty.
\end{equation}
Then there exists a compact $\Gamma\subset \R^n$, with $\Omega := \R^n\setminus \Gamma$ connected, such that, for all $R>R_\Gamma=\max_{x\in \Gamma} |x|$ it holds that $C_{k_j,R}=\lVert \chi_R R(k_j)\chi_R\rVert_{L^2(\Omega)\rightarrow L^2(\Omega)}\geq a_j$ for each $j\in \mathbb{N}$.
\end{theorem}

\begin{figure}[t]
\[
\includegraphics[width=120mm]{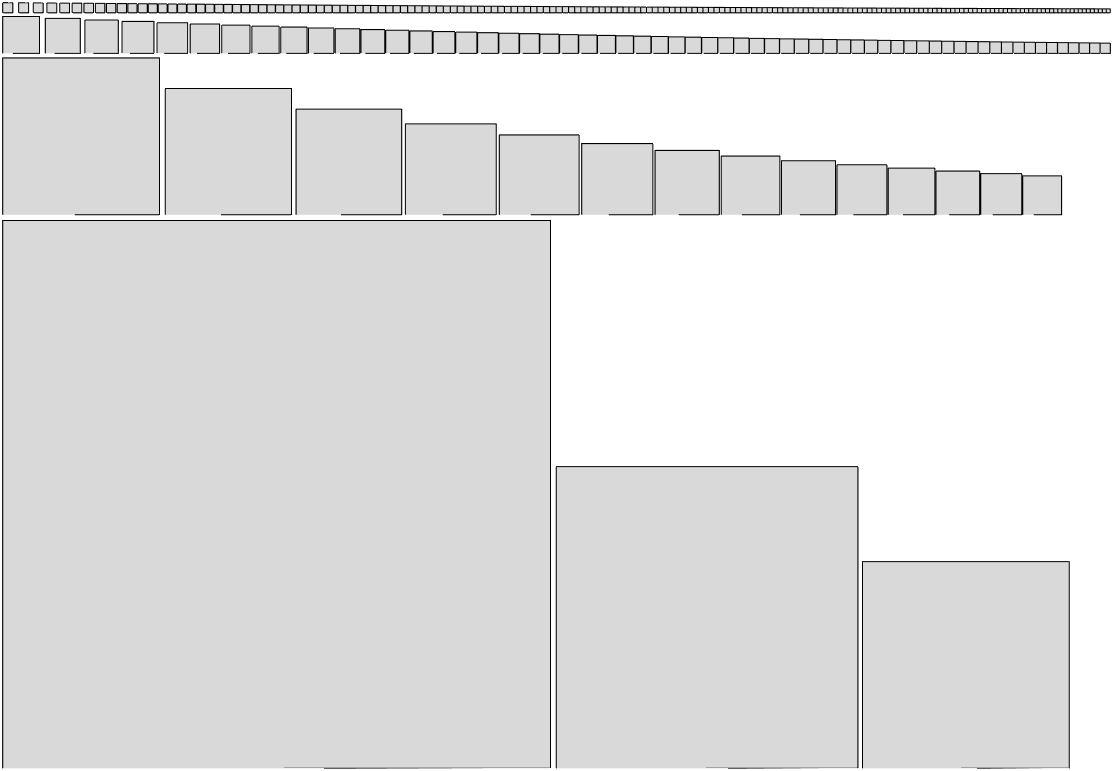} 
\]
\vspace*{-3ex}
\caption{Plot, in the case $n=2$,  of the first four of the infinitely many layers of a compact set $\Gamma\subset [0,3]^2$ constructed as in Theorem \ref{thm1} and the proof of Theorem \ref{thm2} ($\Gamma$ is the closure of the union of the black solid lines).  $\Gamma$ is a subset of $\partial \mO_\infty$, where the bounded open set $\mO_\infty$ (shaded in grey) is a countable union of disjoint squares (the first 255
squares are shown in this image). For further details see below Theorem \ref{thm2}.} \label{fig:Gamma}
\end{figure}

Theorem \ref{thm0} is a corollary of our other main result, Theorem \ref{thm1} below, that, coupled with Theorem \ref{thm2} (a variant on the main theorem of \cite{MM68}) makes an explicit construction of the $\Gamma$ whose existence is asserted in Theorem \ref{thm0}. (Combining Theorem \ref{thm1} instead with Theorem \ref{thm2v} provides a proof of Theorem \ref{thm0} with \eqref{eq:sumfin} replaced by the marginally weaker condition \eqref{eq:weaker}.) The choice of the translations $t_j\in \R^n$, referenced in Theorem \ref{thm2}, is made explicit in the associated Algorithm \ref{alg:trans} (and the corresponding choice in Theorem \ref{thm2v} is made explicit in the proof of that theorem).

These theorems construct $\Gamma$ as a subset of the boundary of  a bounded open set $\mO_\infty$ that is the union of a sequence $\mO_1,\mO_2,\ldots$ of cubes whose closures are disjoint (for examples in the case $n=2$, discussed in more detail below Theorem \ref{thm2}, in Example \ref{ex:vert}, and below Theorem \ref{thm2v}, respectively, see Figures \ref{fig:Gamma}--\ref{fig:GammaE}). The $j$th cube $\mO_j$ has sidelength $\ell_j$ chosen so that $k_j^2$ is the first Dirichlet eigenvalue of $-\Delta$ in $\mO_j$. It follows that $k_j^2$ is also a Dirichlet eigenvalue of $-\Delta$ in $\mO_\infty$. Further, the corresponding eigenfunction, extended by zero from $\mO_j$ to $\Omega = \R^n\setminus \Gamma$, is, with the choice of $\varepsilon_j$ that we make in Theorem \ref{thm1},  a sufficiently good quasimode of the standard weak formulation  of the scattering problem that it follows that $C_{k_j,R}\geq a_j$ for every $R>R_\Gamma$ (for details see the proof of Theorem \ref{thm1}).


In Theorems \ref{thm1}, \ref{thm2}, and \ref{thm2v} below we use the following notations. For $S\subset \R^n$, $\ell>0$, and $t\in \R^n$,  $\ell S + t:= \{\ell x+t:x\in S\}$ is $S$ scaled by a factor $\ell$, then translated by the vector $t$. Define
$\mathcal{O}:=\{x=(x_1,...,x_n)\in \mathbb{R}^n: \text{$0<x_i<1$ for $i=1,...,n$}\}$, so that $\mO$ is an open unit cube. Define the set $\Gamma^\varepsilon\subset \partial \mathcal{O}$, for $0<\varepsilon<1$, by
\begin{equation*}
\Gamma^\varepsilon=\partial \mathcal{O}\setminus\{x=(x_1,...,x_n)\in\R^n: \text{$x_n=0$ and $0<x_i<\varepsilon$ for $i=1,...,n-1$}\},
\end{equation*}
so that $\Gamma^\varepsilon$ is $\partial \mO$ with an $(n-1)$-dimensional cube of sidelength $\varepsilon\in (0,1)$ removed from one of its faces, and $\R^n\setminus \Gamma^\varepsilon$ is connected. If $\varepsilon \ll 1$ then, in the context of scattering problems, $\Gamma^\varepsilon$ is a {\em Helmholtz resonator} in the sense, e.g., of \cite{Gady,Gady2}, or an  {\em open cavity}, in the sense, e.g., of  \cite{Bruno}.  
\begin{theorem}\label{thm1}
Suppose the sequences $(k_j)_{j\in \mathbb{N}}$ and $(a_j)_{j\in \mathbb{N}}$ are positive, increasing and unbounded. For $j\in \N$ let $\ell_j:=\pi \sqrt{n}k_j^{-1}$, $t_j\in \R^n$, and $\mO_j := \ell_j\mO+t_j$, so that $\mO_j$ is a cube of sidelength $\ell_j$,  and let $\Gamma_j:= \ell_j\Gamma^{\varepsilon_j}+t_j \subset \partial \mathcal{O}_j$, where
\begin{equation}\label{gaplength}
\varepsilon_j:=\left(\frac{3}{2\pi^2}\right)^{1/3}\, \bigg(1+2k_j\sqrt{2k_j^2 a_j^2 +a_j}\,\bigg)^{-2/(3n-3)} \, \in \, (0,1),
\end{equation}
so that $\Gamma_j$ is $\partial \mO_j$ with an $(n-1)$-dimensional cube of sidelength $\ell_j\varepsilon_j\in (0,\ell_j)$ removed from one of its faces.
Furthermore, suppose that $(k_j)_{j\in \N}$ and the translations $t_j\in \R^n$ are such that: 
\begin{itemize}
\item[(i)] $\overline{\mathcal{O}_i}\cap \overline{\mathcal{O}_j}=\varnothing$ for $i\neq j$, and hence $\Gamma_i \cap \Gamma_j=\varnothing$ for $i\neq j$;
\item[(ii)] $\mO_\infty := \bigcup_{j=1}^\infty \mathcal{O}_j$ is bounded, so that $\Gamma:=\overline{\bigcup_{j=1}^\infty \Gamma_j}\subset \partial \mO_\infty$ is compact;
\item[(iii)] $\Omega:= \mathbb{R}^n\setminus \Gamma$ is connected.
\end{itemize}
Then, for every $R>R_\Gamma$, it holds that $C_{k_j,R}=\lVert \chi_R R(k_j)\chi_R\rVert_{L^2(\Omega)\rightarrow L^2(\Omega)}> a_j$ for each $j\in \mathbb{N}$.
\end{theorem}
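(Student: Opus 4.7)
The natural strategy is a quasimode argument: take the first Dirichlet eigenfunction of the cube $\mathcal{O}_j$, extended by zero, as a near-solution of the scattering problem at wavenumber $k_j$. Define
\begin{equation*}
u_j(x) \; := \; \prod_{i=1}^n \sin\!\big(\pi(x_i-(t_j)_i)/\ell_j\big), \quad x \in \mathcal{O}_j,
\end{equation*}
and $u_j := 0$ on $\Omega \setminus \mathcal{O}_j$. The choice $\ell_j = \pi\sqrt n\, k_j^{-1}$ makes $k_j^2$ the first Dirichlet eigenvalue of $-\Delta$ on $\mathcal{O}_j$, so $-\Delta u_j = k_j^2 u_j$ inside, while $u_j = 0$ on $\partial \mathcal{O}_j$ makes the zero extension lie in $H^1(\R^n)$ with $u_j = 0$ on $\Gamma$. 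A short convexity argument, using that the $2^n$ corners of $\mathcal{O}_j$ all lie in $\Gamma_j \subset \Gamma \subset \overline{B_{R_\Gamma}}$, gives $\overline{\mathcal{O}_j} \subset \overline{B_{R_\Gamma}} \subset B_R$ for every $R > R_\Gamma$.

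For such $R$ and any $f \in L^2(\Omega_R)$ with $\supp f \subset \Omega_R$, let $u := R(k_j) f$. Testing $u_j$ against the weak form of the Helmholtz equation and integrating by parts on $\mathcal{O}_j$ (the compact support of $u_j$ kills the boundary contribution at infinity), using $u_j = 0$ on $\partial \mathcal{O}_j$ and $u = 0$ on $\Gamma_j$, yields the key identity
\begin{equation*}
\int_{\Omega_R} f\,\bar u_j\, dx \; = \; \int_{\Sigma_j} u\, \overline{\partial_\nu u_j}\, dS,
\end{equation*}
where $\Sigma_j := \partial \mathcal{O}_j \setminus \Gamma_j$ is the slit of sidelength $\ell_j \varepsilon_j$. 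Taking $f := \bar u_j / \|u_j\|_{L^2}$, so $\|f\|_{L^2} = 1$ and the left-hand side equals $\|u_j\|_{L^2}$, Cauchy--Schwarz combined with a multiplicative trace inequality on the unit cube (constant $2$), scaled to $\mathcal{O}_j$, yields
\begin{equation*}
\|u_j\|_{L^2} \; \leq \; \|\partial_\nu u_j\|_{L^2(\Sigma_j)}\,\|u\|_{L^2(\Sigma_j)} \; \leq \; \|\partial_\nu u_j\|_{L^2(\Sigma_j)}\,\sqrt{2\pi \sqrt n / k_j}\; C'_{k_j,R}.
\end{equation*}

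Direct computation gives $\|u_j\|^2_{L^2} = \ell_j^n / 2^n$ and, using $\sin(\pi s) \leq \pi s$ for $s \geq 0$, $\|\partial_\nu u_j\|^2_{L^2(\Sigma_j)} \leq \pi^{2n} \ell_j^{n-3} \varepsilon_j^{3(n-1)} / 3^{n-1}$. Substituting the prescribed $\varepsilon_j$ from \eqref{gaplength} (the $\pi$- and $3$-powers cancel), this rearranges to
\begin{equation*}
(C'_{k_j,R})^2 \; \geq \; \frac{n\,\big(1 + 2k_j\sqrt{2k_j^2 a_j^2 + a_j}\big)^{\!2}}{4 k_j^2} \; \geq \; n\,(2 k_j^2 a_j^2 + a_j) \; > \; 2 k_j^2 a_j^2 + a_j,
\end{equation*}
where the final strict inequality uses $n \geq 2$. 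Since \eqref{eq:equiv} gives $(C'_{k_j,R})^2 \leq 2 k_j^2 C_{k_j,R}^2 + C_{k_j,R}$, and the right-hand side is strictly increasing in $C_{k_j,R}$, this forces $C_{k_j,R} > a_j$, as required.

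The main obstacle is that $u_j$ fails to be a quasimode in the usual $L^2$ sense --- the distribution $(-\Delta - k_j^2) u_j$ is a surface delta on $\Sigma_j$, not an $L^2$ function. The identity above sidesteps this by moving the Laplacian onto $u_j$ under the integral, so the residual is controlled by the surface norm $\|\partial_\nu u_j\|_{L^2(\Sigma_j)}$, which is made small by taking $\varepsilon_j$ small; the precise constants in \eqref{gaplength} are dictated by matching this surface norm against the scale-$k_j^{-1}$ trace constant and the shape $\sqrt{2k^2 C^2 + C}$ of the equivalence in \eqref{eq:equiv}, with the factor $n \geq 2$ providing exactly the slack needed to turn $(1+x)^2 \geq x^2$ into a strict inequality.
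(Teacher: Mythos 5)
Your proof is correct and is essentially the paper's argument: the same quasimode $u_j$ (the first Dirichlet eigenfunction of $\mathcal{O}_j$ extended by zero), the same Green's identity reducing the residual to a boundary integral over the slit $\Sigma_j=\partial\mathcal{O}_j\setminus\Gamma_j$, the same multiplicative trace inequality of Lemma \ref{lem:mult}, and the same closing step via \eqref{eq:equiv} together with the monotonicity of $t\mapsto 2k^2t^2+t$. The only (harmless) differences are that you pair directly against the concrete datum $f=\bar u_j/\|u_j\|_{L^2}$ and use \eqref{eq:equiv}, whereas the paper packages the identical estimate as an upper bound on the inf-sup constant $\beta_{k_j,R}$ and then applies \eqref{infsup} (itself derived from \eqref{eq:is} and \eqref{eq:equiv}), and that your trace constant $\sqrt{2\pi\sqrt n/k_j}$ is an overestimate of the sharp $k_j^{-1/2}$ of Lemma \ref{lem:mult} --- but your subsequent arithmetic is consistent with it and the conclusion $C_{k_j,R}>a_j$ still follows.
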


In the above theorem, assuming  that $\varepsilon_j \ll 1$ (which holds if $k_j^2 a_j \gg 1$), $\Gamma_j$ is a Helmholtz resonator for each $j$, tuned to have a complex resonance close to $k_j$ (see, e.g., \cite[\S2.5]{Gady}). Thus, assuming that (i)-(iii) hold, the theorem constructs $\Gamma$ as the closure of the union of countably many disjoint Helmholtz resonators, constructed so that each $k_j$ is almost a resonance of the scattering problem.

The  following example (see Figure \ref{fig:GammaV}) illustrates the above result, providing a simple arrangement of the $\mO_j$ and $\Gamma_j$ such that (i)-(iii) hold. Note that this construction requires that $k_j$ grow as $j\to \infty$ at a significantly faster rate than \eqref{eq:sumfin}.

\begin{figure}[t]
\[
\includegraphics[width=90mm]{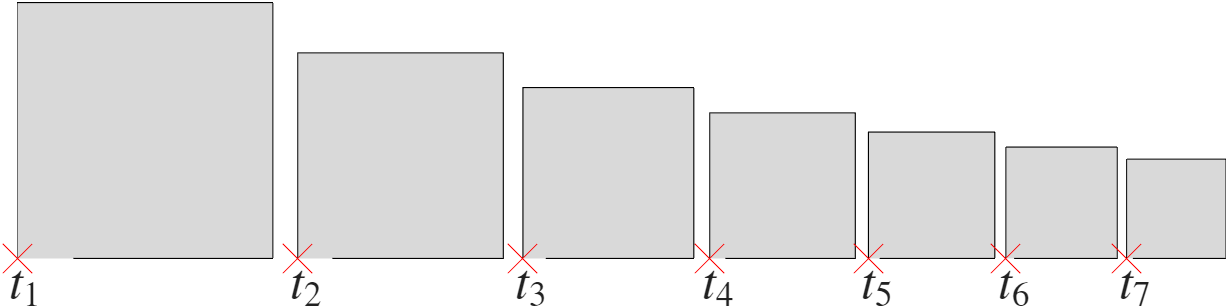} 
\]
\vspace*{-3ex}
\caption{Plot, in the case $n=2$,  of (part of) the compact set $\Gamma\subset \R^2$ constructed as in Example \ref{ex:vert} ($\Gamma$ is the closure of the union of the black solid lines).  $\Gamma$ is a subset of $\partial \mO_\infty$, where the bounded open set $\mO_\infty$ (shaded in grey) is a countable union of disjoint squares (the first seven squares are shown in this image). The red crosses mark the locations of the points $t_1,t_2,\ldots$. In this example $k_j=4(j+4)^{6/5}$, $d_j = (j+7)^{-2}$, and $a_j = j^{1/4}/1000$, for $j\in \N$. For further discussion see Remark \ref{rem:layers}.} \label{fig:GammaV}
\end{figure}

\begin{example} \label{ex:vert} {\em (An application of Theorem \ref{thm1})}
Suppose that 
\begin{equation}\label{strongcond}
\sum_{j=1}^\infty k_j^{-1}<\infty
\end{equation}
and $\ell_j$ is defined as in Theorem \ref{thm1}, so that
$$
\sum_{j=1}^\infty \ell_j < \infty.
$$
Define the translations $(t_j)\subset \R^n$ by $t_1=0$ and 
$$
t_{j+1} = t_j + (\ell_{j} + d_j)e_1, \qquad j \in \N,
$$
where $e_1$ is the unit vector in the first coordinate direction and $(d_j)_{j\in \N}$ is any positive, summable sequence, and, with this choice of $(t_j)$, define $\mO_j$ and $\Gamma_j$, for $j\in \N$, as in Theorem \ref{thm1}, noting that the definition of $\Gamma_j$ depends, through \eqref{gaplength}, on the unbounded sequence $(a_j)_{j\in \N}$. This  choice of $(t_j)_{j\in \N}$ places the cubes $(\mO_j)_{j\in \N}$ in a horizontal row, with $\mO_{j+1}$ to the right of $\mO_j$ and $\dist(\mO_{j+1},\mO_j)=d_j$, $j\in \N$. Conditions (i)-(iii) of Theorem \ref{thm1} are satisfied, and
\begin{equation} \label{eq:G}
\Gamma =\overline{\bigcup_{j=1}^\infty \Gamma_j} = \{s e_1\}\cup \bigcup_{j=1}^\infty \Gamma_j, \quad \mbox{where} \quad s:= \sum_{j=1}^\infty(\ell_{j}+d_j).
\end{equation}
Thus Theorem \ref{thm1} implies that, if the positive increasing sequence $(k_j)_{j\in \N}$ satisfies \eqref{strongcond} and $\Gamma$ is given by \eqref{eq:G}, then $C_{k_j,R} > a_j$ for $j\in \N$ and $R>R_\Gamma$. 
\end{example}

The above example shows that Theorem \ref{thm1} can be applied to bound $C_{k_j,R}$ if $(k_j)_{j\in \N}$ is positive and increasing and satisfies \eqref{strongcond}. These conditions imply that $k_j$ increases rather rapidly, in particular that, for some $C>0$, $k_j\geq Cj$, for $j\in \N$ (cf.~\eqref{eq:bb} below). In the following remark we point out that \eqref{eq:sumfin} is a necessary condition  for conditions (i) and (ii) in Theorem \ref{thm1} to hold for some choice of the translations $(t_j)_{j\in \N}$, and note other constraints on $k_j$ that (i) and (ii) imply. Theorem \ref{thm2} below will show that \eqref{eq:sumfin} is also a sufficient condition for (i)-(iii) in Theorem \ref{thm1} to hold.

\begin{remark} \label{rem:const} {\em (The growth of $(k_j)_{j\in \N}$ in Theorem \ref{thm1}.)}
Suppose (i) in Theorem \ref{thm1} holds. Then, with $\ell_j$ as defined there and with $|\mO_\infty|_n$ denoting the $n$-dimensional Lebesgue measure of $\mO_\infty$,
\begin{equation} \label{eq:vol}
|\mO_\infty|_n = \sum_{j=1}^\infty \ell_j^n = \pi^n n^{n/2}  \sum_{j=1}^\infty k_j^{-n}.
\end{equation}
Thus, if (ii) in Theorem \ref{thm1} also holds, so that $\mO_\infty$ is bounded, \eqref{eq:sumfin} follows.

Since we assume in Theorem \ref{thm1} that $(k_j)_{j\in \N}$ is increasing, it follows from \eqref{eq:sumfin}  (e.g., \cite[\S179]{Hardy}) that
\begin{equation} \label{eq:abel}
jk_j^{-n} \to 0 \quad \mbox{as} \quad  j\to\infty.
\end{equation}
Further, quantitatively, it follows from \eqref{eq:vol} and  \eqref{eq:sumfin}  that
\begin{equation} \label{eq:bb}
j k_j^{-n} \leq \sum_{i=1}^j k_i^{-n} \leq  \pi^{-n} n^{-n/2} |\mO_\infty|_n, \qquad j\in \N,
\end{equation}
so that 
\begin{equation} \label{eq:2ndbound}
j^{-1/n} k_j \geq  \pi n^{1/2} \left(|\mO_\infty|_n\right)^{-1/n}, \qquad j\in \N.
\end{equation}

A further lower bound on $k_j$  follows from the observation that, if (i) and (ii) in Theorem \ref{thm1} hold, then, for each $j\in \N$, $k_j^2$ is an eigenvalue of $-\Delta$ in $\mO_j$, and so is also a Dirichlet eigenvalue for the bounded open set $\mO_\infty$. Thus, where $\lambda_j$ denotes the $j$th such eigenvalue, with  the eigenvalues listed in non-decreasing order and counted according to multiplicity so that $0<\lambda_1 \leq \lambda_2 \ldots$, it holds that $k_j\geq \lambda^{1/2}_j$, for $j\in \N$. Thus, and by the Weyl law (see \cite[Equation (3.3.4)]{levitin2023topics}),
$$
\liminf_{j\to\infty} j^{-1/n} k_j \geq \lim_{j\to\infty} j^{-1/n}\lambda^{1/2}_j = \left(\frac{|\mO_\infty|_n}{(4\pi)^{n/2} \Gamma(1+n/2)}\right)^{-1/n}.
$$
But this is a weaker constraint than \eqref{eq:abel} which implies that $j^{-1/n} k_j\to \infty$ as $j\to\infty$. It is also weaker than the quantitative constraint  \eqref{eq:2ndbound} since $2(\Gamma(1+n/2))^{1/n} \leq \sqrt{\pi n}$, for $n\in \N$. (This follows by induction on $n$ for odd and even $n$ separately.)
\end{remark}

The  following remark explores the relationship between (i)-(iii) in the above theorem. Here and subsequently we use the notation 
$$
B_R:= \{x\in \R^n:|x|<R\}, \qquad R>0.
$$ 

\begin{remark} {\em (Conditions (i) and (ii) in Theorem \ref{thm1} do not imply condition (iii))} \label{rem:i23} It might be imagined that (i)-(ii) in Theorem \ref{thm1} automatically imply that (iii) holds. This is not the case, as is shown by the following example, inspired by the ``Swiss cheese sets'' of Polking \cite{Polking72} (and see  \cite[Example 8.3]{chandler2018well}). 

Let $\{q_j\}_{j\in \mathbb{N}}$ denote a dense set of distinct points in $\partial B_1\setminus \{\pm e_1,\ldots,\pm e_n\}$, where $e_m$ denotes the unit vector in the $m$th coordinate direction. Let us describe a recursive placement of the cubes $\mO_1,\mO_2,\ldots$ in the open unit ball $B_1$, such that all the conditions of Theorem \ref{thm1} except (iii) are satisfied. Set $\mO_1 := \ell_1\mO + t_1$, choosing $\ell_1 > 0$ and $t_1\in \R^n$ so that $\mO_1 \subset \overline{B_1}$ and $B_{1/2}\cap \mO_1=\emptyset$, and such that $\partial \mO_1 \cap \partial B_1=\{q_1\}$; this last requirement is possible since $q_1\neq \pm e_m$, for $m=1,\ldots,n$. Having chosen $\mO_1,\ldots,\mO_j$, for some $j\in \N$, set $\mO_{j+1} := \ell_{j+1} \mO+t_{j+1}$, choosing $\ell_{j+1}>0$ and $t_{j+1}\in \R^n$ so that $\ell_{j+1}<\ell_j$, $\mO_{j+1} \subset \overline{B_1}$, $B_{1/2}\cap \mO_{j+1}=\emptyset$, and $\partial \mO_{j+1} \cap \partial B_1=\{q_{j+1}\}$, and such that $\overline{\mO_{j+1}}\cap \overline{\mO_i}=\emptyset$, $i=1,\ldots,j$. 

The sequence $(\mO_j)_{j\in \N}$ of cubes formed by this construction clearly satisfies (i) and (ii)  in Theorem \ref{thm1}. In particular, $\mO_\infty:= \bigcup_{j=1}^\infty \mO_j \subset B_1$ and $B_{1/2}\cap \mO_\infty = \emptyset$. Further, since  $\partial \mO_j \cap \partial B_1=\{q_j\}$, for $j\in \N$, it follows that $\Gamma_j\cap \partial B_1=\{q_j\}$, for $j\in \N$, where  $\Gamma_j := \ell_j \Gamma^{\varepsilon_j} + t_j$, for some $\varepsilon_j\in (0,1)$ (note that this definition for $\Gamma_j$ implies that $\Gamma_j\subset \partial \mO_j$, and that $\Gamma_j$  contains all the corners of the cube $\mO_j$). It follows that $\partial B_1 \subset \Gamma := \overline{\bigcup_{j=1}^\infty \Gamma_j} \subset \overline{B_1}$ and that $B_{1/2}\cap \Gamma=\emptyset$. Thus (iii) is not satisfied, i.e., $\Omega := \R^n\setminus \Gamma$ is not connected. Further, if $(k_j)_{j\in \N}$ is defined by $k_j:= \pi \sqrt{n}\ell_j^{-1}$, $j\in \N$,  then $k_j$ and $\ell_j$ are related as in Theorem \ref{thm1} and $(k_j)_{j\in \N}$ is a positive, increasing, unbounded sequence. If $(a_j)_{j\in \N}$ is some other positive, increasing, unbounded sequence and $\varepsilon_j$ is defined by \eqref{gaplength}, for $j\in \N$, then all the conditions of Theorem \ref{thm1} are satisfied, except that (iii) does not hold. 
\end{remark}

Our next theorem shows that, if $(k_j)_{j\in\N}$  satisfies \eqref{eq:sumfin} and is increasing, then it is possible to choose the translations $(t_j)_{j\in \N}$ so that (i)-(iii) in Theorem \ref{thm1} hold, so that $C_{k_j,R}>a_j$ for each $j\in \N$, establishing Theorem \ref{thm0}. This result is, with some additions to justify that (iii) in Theorem \ref{thm1} holds and to provide explicit formulae for the translations $(t_j)_{j\in\N}$, a result on the packing of cubes inside a larger cube 
 that is the main result of \cite{MM68}, and that we reproduce as Theorem \ref{thm:cube} below. 
 

\begin{theorem}\label{thm2}
Suppose the sequence $(\varepsilon_j)_{j\in \N}\subset (0,1)$, that $(\ell_j)_{j\in \N}\subset (0,\infty)$ is decreasing,
\begin{equation} \label{eq:ellb}
V := \sum_{j=1}^\infty \ell_j^n < \infty, \quad \mbox{and} \quad a>\ell_1+(V-\ell_1^n)^{1/n}.
\end{equation}
Then there exists a choice of the sequence of translations $(t_j)_{j\in \N}\subset \R^n$ such that the sequences  $(\mO_j)_{j\in \N}$ and $(\Gamma_j)_{j\in \N}$, defined by
$$
\mO_j := \ell_j \mO+t_j \quad \mbox{and} \quad \Gamma_j := \ell_j \Gamma^{\varepsilon_j} + t_j, \qquad j\in \N,
$$
satisfy (i)-(iii) of Theorem \ref{thm1},  with $\mO_\infty = \bigcup_{j=1}^\infty \mO_j\subset a\mO$.
\end{theorem}

A specification for the translations $(t_j)_{j\in \N}\subset \R^n$ that achieve (i)-(iii) is implicit in \cite[Theorem 1]{MM68}, and is made explicit  in \S\ref{sec:main2} in the proof of Theorem \ref{thm2} and  in Algorithm \ref{alg:trans}. 
Briefly, the sequence of cubes $(\mO_j)_{j\in \N}$ is arranged in layers, at different heights in the $n$th coordinate direction,  with a spacing $d_i>0$ between layer $i$ and layer $i+1$, for some sequence $(d_i)_{i\in \N}\in \ell^1(\N)$. The number of cubes in each layer, and the number of layers, in particular whether finitely many or infinitely many layers are needed, is determined by Algorithm \ref{alg:trans} as a function of the sequence $(\ell_j)_{j\in \N}$.

In Figure \ref{fig:Gamma}, in the case $n=2$ with $k_j := 2j^{1/2} \log^{3/2}(j+\re)$, for $j\in \N$, which satisfies \eqref{eq:sumfin}, we show part of this construction (the first four of the infinitely many layers that are needed in this case) if Algorithm \ref{alg:trans} has input $a=3$ and the choice $d_i := (i+7)^{-2}$, $i\in \N$, is made on line 3 of the algorithm. (With these choices it follows that $\ell_1\approx1.476$ and
$V:=|\mO_\infty|_n\approx4.1024$, so that 
$a-\ell_1-(V-\ell_1^2)^{1/2}\approx0.1370>
\epsilon=\sum_{i=1}^\infty d_i\approx0.1331$.) In this example, so that the values of $\varepsilon_j$ given by \eqref{gaplength} lead to visible gaps (i.e., $\Gamma_j$ is distinguishable from $\partial \mO_j$), we choose a very slow growth and very small values for $a_j$, defining $a_j := j^{1/4}/10,000$, $j\in \N$. We emphasise that changing the sequence $(a_j)_{j\in \N}$ does not affect the size or placement of the boxes $\mO_j$, just the size of the gaps $\varepsilon_j$.

In \S\ref{sec:main2} we also prove a variant of Theorem \ref{thm2} (Theorem \ref{thm2v} below), included because it proves almost as strong a result as Theorem \ref{thm2} but with a simpler and  markedly different arrangement of the cubes $(\mO_j)_{j\in \N}$.  The constraint on $(\ell_j)_{j\in \N}$ in this variant, that \eqref{eq:ellbv} holds for some $c,\eta>0$ and $M\in \N$, implies \eqref{eq:ellb}. Thus, except that we have additionally \eqref{eq:onep}, this is a weaker result than Theorem \ref{thm2}, but only ``logarithmically weaker'' in that the constraint \eqref{eq:ellbv} is satisfied if $(\ell_j\log^2(j))_{j\in \N}$ is decreasing at infinity and satisfies a logarithmically weaker version of \eqref{eq:ellb}, that
$$
\sum_{j=1}^\infty \ell_j^n \log^2(j) < \infty.
$$
(This claim holds since these assumptions on $\ell_j$ imply (cf.~\eqref{eq:abel}) that $j\ell_j^n\log^2(j)=o(1)$ as $j\to \infty$, so that $\ell_j = o\left((j\log(j))^{1/n}\log(\log(j))\right)^{-1}$, so that \eqref{eq:ellbv} holds with $M=2$ and $\eta=1$.) 

In a recent preprint  (see \cite[\S4]{ChFr25}), Chaumont-Frelet has proved in the case $n=3$ a result than is less sharp than Theorem \ref{thm2v} below  (and so  less sharp than Theorem \ref{thm2}), showing, by a similar construction to that of \cite[Theorem 1]{MM68}, that if, for some $c>0$ and $\varepsilon>0$, $0<\ell_j\leq c/ j^{1/3+\varepsilon}$, for $j\in \N$, then there exists  $(t_j)_{j\in \N}\subset \R^3$ such that (i) and (ii) in Theorem \ref{thm1} hold.

In the following theorem we use, for $m\in \N$ and $x>0$, the notations
$$
\re_1 := \re, \quad \re_{m+1} := \exp(\re_m), \quad \log_1(x) := \log(x), \quad \log_{m+1}(x) := \log(\log_m(x)),
$$  
adopting in \eqref{eq:ellbv} the usual convention that the product evaluates as $1$ if $M=1$. Combining Theorems \ref{thm1} and \ref{thm2v} leads to a proof of Theorem \ref{thm0} with  \eqref{eq:sumfin} replaced by the (marginally) weaker condition that, for some $c,\eta>0$ and $M\in \N$,
\begin{equation} \label{eq:weaker}
k_j \geq c \left(j \prod_{m=1}^{M-1}\log_m(j+\re_m)\right)^{1/n}\log_{M}^{1+\eta}(j+\re_{M}), \quad j\in \N.
\end{equation}
The relationship between \eqref{eq:sumfin} and \eqref{eq:weaker} is that, for every $c,\eta>0$ and $M\in \N$, \eqref{eq:weaker} implies \eqref{eq:sumfin}, but this is not true, for any $M\in \N$, if we delete the $\log_M^{1+\eta}(j+\re_{M})$ term in \eqref{eq:weaker}. On the other hand, as we noted in Remark \ref{rem:const}, the assumptions of Theorem \ref{thm0} imply that $j^{-1/n}k_j\to\infty$ as $j\to\infty$, a weakened version of \eqref{eq:weaker}.
\begin{theorem}\label{thm2v}
Suppose the sequence $(\varepsilon_j)_{j\in \N}\subset (0,1)$ and that $(\ell_j)_{j\in \N}\subset (0,\infty)$ is decreasing, $c',\eta>0$, $M\in \N$, and
\begin{equation} \label{eq:ellbv}
\ell_j \leq \frac{c'}{ \left(j \prod_{m=1}^{M-1}\log_m(j+\re_m)\right)^{1/n}\log_{M}^{1+\eta}(j+\re_{M})}, \quad j\in \N.
\end{equation}
Then there exists a choice of the sequence of translations $(t_j)_{j\in \N}\subset \R^n$ such that the sequences  $(\mO_j)_{j\in \N}$ and $(\Gamma_j)_{j\in \N}$, defined as in Theorem \ref{thm2},
satisfy (i)-(iii) of Theorem \ref{thm1}. Further, for some $x_\infty\in \R^n$,
\begin{equation} \label{eq:onep}
\Gamma = \overline{\bigcup_{j=1}^\infty \Gamma_j} = \{x_\infty\} \cup \bigcup_{j=1}^\infty \Gamma_j.
\end{equation}
\end{theorem}

\begin{figure}[t]
\[
\includegraphics[width=20mm]{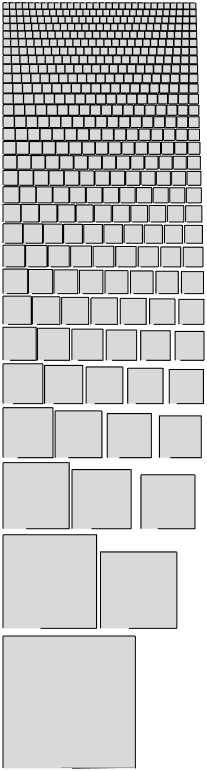} 
\]
\vspace*{-3ex}
\caption{Plot, in the case $n=2$,  of the first 30 layers of the compact set $\Gamma\subset \R^2$ constructed as in Theorem \ref{thm1} and the proof of Theorem \ref{thm2v} ($\Gamma$ is the closure of the union of the black solid lines).  $\Gamma$ is a subset of $\partial \mO_\infty$, where the bounded open set $\mO_\infty$ (shaded in grey) is a countable union of disjoint squares (the first 465 
squares are shown in this image). Note that in this construction there are precisely $N_i=i$ squares in level $i$ and that the construction in this figure and in Figure \ref{fig:Gamma} differ only in the choice of the translation vectors $(t_j)_{j\in \N}$. For further details see below Theorem \ref{thm2v}.} \label{fig:GammaE}
\end{figure}

The construction in the proof of the above theorem has the feature \eqref{eq:onep}, that $\Gamma$ differs from $\bigcup_{j=1}^\infty \Gamma_j$ at the single point $x_\infty$, which leads, as we discuss in \S\ref{sec:fq}, to a variant of this construction where the boundary of $\Gamma$ is Lipschitz continuous except at a single point.  The boxes $(\mO_j)_{j\in \N}$ are arranged in layers in the proof of Theorem \ref{thm2v}, as in the proof of Theorem \ref{thm2}, with a spacing $d_i$ between layer $i$ and $i+1$. But a significant difference, leading to \eqref{eq:onep}, is that we make the {\em a priori} specification that there are $N_i=\lfloor i\prod_{m=1}^{M-1}\log_m(i+\re_m)\rfloor^{n-1}$ boxes in the $i$th layer, where $\lfloor\cdot\rfloor$ is the floor function. In Figure \ref{fig:GammaE} we plot, for the same choices of $(k_j)_{j\in \N}$ and $(d_j)_{j\in \N}$ as in Figure \ref{fig:Gamma} (so \eqref{eq:ellbv} holds with $c=2$, $\eta=1/2$, and $M=1$, and $N_i=i$, for $i\in \N$), the first 30 of the infinitely many layers in the construction of $\Gamma$ in the proof of Theorem \ref{thm2v}.

We remark that we have recently applied the results and arguments of Theorems \ref{thm1} and \ref{thm2} to establish lower bounds on the worst case growth of the condition number in first kind integral equation formulations of the scattering problem of \S\ref{sec:intro} for a general compact scatterer $\Gamma$; see \cite[Proposition 2.7]{SiavashSimon0}.

\subsection{Further questions} 
\label{sec:fq} We note two further questions related to our main results.  The first of these we address elsewhere; the second appears open.

\paragraph{Question 1} Theorem \ref{thm0} tells us that $C_{k,R}$ can increase arbitrarily fast through any mildly increasing sequence $(k_j)_{j\in \N}$ that satisfies \eqref{eq:sumfin} if our only requirement on $\Gamma$ is that it is compact  with connected complement. So  does Theorem \ref{thm:most} still hold if its requirement that $\Gamma$ be the closure of a bounded Lipschitz domain is relaxed to allow arbitrary compact $\Gamma$? In a paper in preparation \cite{SiavashSimon1} we answer this question in the affirmative, at the same time making a slight  sharpening of the original proof, showing the following result.

\begin{theorem} \label{thm:most2}
Suppose that $\Gamma\subset \R^n$ is compact and  $\Omega:= \R^n\setminus \Gamma$ is connected. Then, given $k_0>0$, $R>R_\Gamma$, $\delta>0$,  and $\varepsilon>0$, there exists $C=C(k_0,R, \delta,\varepsilon,n)$ and a set $J\subset [k_0,\infty)$ with Lebesgue measure $|J|\leq \delta$, such that
$$
C_{k,R} \leq C k^{2n+\varepsilon}, \qquad k \in [k_0,\infty)\setminus J.
$$
\end{theorem}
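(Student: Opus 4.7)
My approach would follow the two-step architecture of \cite{lafontaine2021most}: (a) a bound for the cut-off resolvent at complex wavenumbers slightly above the real axis, and (b) a subharmonic-function sampling lemma that transfers such a bound to the real axis except on a set of small measure. The Lipschitz hypothesis in \cite{lafontaine2021most} enters only in (a), via Burq's exponential bound \eqref{eq:Burq}; for arbitrary compact $\Gamma$ that bound is unavailable (indeed, Theorems \ref{thm0}--\ref{thm2} preclude any analogue on the real axis) and must be replaced by an argument that avoids $\partial \Gamma$ entirely.

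\textbf{Step 1 (off-axis bound).} The Dirichlet Laplacian $-\Delta_D$ on $L^2(\Omega)$ can be defined, for arbitrary compact $\Gamma$ with $\Omega=\R^n\setminus\Gamma$ connected, as the non-negative self-adjoint Friedrichs extension associated with $H^1_0(\Omega)$ (in a sense compatible with the weak boundary condition $u\in \Hol_0(\Delta,\Omega)$). Functional calculus gives $\|(-\Delta_D-z)^{-1}\|_{L^2\to L^2}\leq 1/\mathrm{Im}\,z$ for $\mathrm{Im}\,z>0$. A Rellich uniqueness argument, exploiting the exponential decay at infinity afforded by $\mathrm{Im}\,k>0$, then identifies $(-\Delta_D-k^2)^{-1}$ with the Sommerfeld outgoing resolvent $R(k)$ of \eqref{resolvent}, yielding
\begin{equation*}
\|\chi_R R(k+\ri h)\chi_R\|_{L^2\to L^2} \;\leq\; \frac{1}{2kh}, \qquad k,h>0,
\end{equation*}
for every compact $\Gamma$. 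This is the replacement in the general compact setting for Burq's bound, and is in fact quantitatively much stronger.

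\textbf{Step 2 (sampling) and main obstacle.} Since $k\mapsto \chi_R R(k)\chi_R$ is operator-holomorphic on $\{\mathrm{Im}\,k>0\}$, the real-valued function $F(k):=\log(1+\|\chi_R R(k)\chi_R\|)$ is subharmonic there. A Jensen/Markov-type sampling inequality for subharmonic functions, applied to a rectangle $[k_0,K]\times(0,h_K]$ with an optimal choice of height $h_K$ and using the bound of Step 1, then delivers the polynomial estimate $\|\chi_R R(k)\chi_R\|\leq C k^{2n+\varepsilon}$ off an exceptional set of measure at most $\delta$; the improvement of the exponent from $5n/2+\varepsilon$ to $2n+\varepsilon$ comes from a refined use of this sampling lemma, taking advantage of the quantitatively strong bound of Step 1. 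The principal technical hurdle is the rigorous identification, in Step 1, of the scattering resolvent $R(k)$ from \eqref{resolvent} with the self-adjoint resolvent $(-\Delta_D-k^2)^{-1}$ for $\mathrm{Im}\,k>0$, without any regularity assumption on $\partial \Omega$. This requires verifying that $\Hol_0(\Delta,\Omega)$ is compatible with the form domain of $-\Delta_D$, that Sommerfeld decay is automatic under the exponential decay of $H^1_0$-elements when $\mathrm{Im}\,k>0$, and that Rellich's lemma applies; each is folklore in the smooth case but needs genuine justification when $\Gamma$ is only compact with connected complement, and is the content of the companion paper \cite{SiavashSimon1}. The sampling step, by contrast, is essentially scalar and requires only minor modification of the argument in \cite{lafontaine2021most}.
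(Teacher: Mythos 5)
Your two-step architecture (off-axis self-adjointness bound plus subharmonic sampling) is indeed the right skeleton, and Step~1 --- verifying that, for arbitrary compact $\Gamma$ with connected complement, the outgoing resolvent \eqref{resolvent} at $\mathrm{Im}\,k>0$ coincides with the resolvent of a non-negative self-adjoint operator on $L^2(\Omega)$ --- is exactly the ingredient needed to bring the compact case inside the scope of the black-box framework. The paper does say that the proof of Theorem~\ref{thm:most2} (given in \cite{SiavashSimon1}) proceeds by applying the general black-box result \cite[Theorem~3.3]{lafontaine2021most} to this setting and then sharpening the exponent by a finer semiclassical maximum principle.

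There are, however, two concrete errors in your reasoning that a referee would flag. First, your claim that the Lipschitz hypothesis of \cite{lafontaine2021most} ``enters only in (a), via Burq's exponential bound \eqref{eq:Burq}'' is incorrect: Burq's bound requires $\Gamma=\overline{\Omega_-}$ with $\Omega_-$ of class $C^\infty$ and is not used in \cite{lafontaine2021most} at all. The Lipschitz hypothesis there is used to set up the black-box self-adjoint operator and its meromorphic continuation, i.e.\ precisely the structures your Step~1 re-derives for arbitrary compact $\Gamma$. Consequently --- and this is the second, more serious error --- the off-axis estimate $\lVert\chi_R R(k+\ri h)\chi_R\rVert\leq 1/(2kh)$ is \emph{not} ``quantitatively much stronger'' than what \cite{lafontaine2021most} have: it is the same self-adjointness bound that already underlies their argument for Lipschitz $\Gamma$. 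The improvement of the exponent from $5n/2+\varepsilon$ to $2n+\varepsilon$ therefore cannot be attributed to Step~1. As the paper states, it comes from a sharper semiclassical maximum principle, i.e.\ a refinement of the sampling lemma in Step~2 itself (where the dimension dependence is set by the Weyl-type count of black-box resonances near the real axis), not from a stronger off-axis input. Relatedly, describing the sampling step as ``essentially scalar'' and requiring ``only minor modification'' undersells the fact that it is exactly there that the resonance-counting and hence the polynomial degree $2n+\varepsilon$ are determined; if you only had the subharmonic bound and the off-axis estimate, without control of the density of poles of the meromorphic continuation of $\chi_R R(k)\chi_R$, you could not extract any polynomial rate at all.
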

Our proof of this result uses the general {\em black-box} version,  \cite[Theorem 3.3]{lafontaine2021most}, of Theorem \ref{thm:most} from \cite{lafontaine2021most}, refining the proof of that result by use of  a sharper semiclassical maximum principle.

\paragraph{Question 2} Theorem \ref{thm0} tells us that $C_{k,R}$ can grow arbitrarily fast, through some sequence, as a function of $k$,  if all we require is that $\Gamma$ is compact with connected complement, whereas no worse than exponential growth is possible, by the bound \eqref{eq:Burq} of \cite{Burq1998}, if $\Gamma=\overline{\Omega_-}$, for some bounded $C^\infty$ domain $\Omega_-$. So, if $\Gamma=\overline{\Omega_-}$ and $\Omega_-$ is not $C^\infty$ but has less smoothness, e.g., is Lipschitz or $C^0$ (as defined, e.g., in \cite{mclean2000strongly}), is there still a limit on how fast $C_{k,R}$ can grow as $k\to \infty$, and is this limit still one of exponential growth? 

Related to this question one can modify  our construction of $\Gamma$ in Theorems \ref{thm1} and \ref{thm2v} (which together imply Theorem \ref{thm0} with \eqref{eq:sumfin} replaced by the marginally weaker \eqref{eq:weaker}), replacing $\mO$ and $\Gamma^\varepsilon$  by thickened versions, $\widetilde \mO := (-1,2)^n$ and
\begin{eqnarray*}
\widetilde \Gamma^{\varepsilon} &:=& \overline{\widetilde \mO} \setminus\\
& & (\mO \cup \{(x_1,...,x_n)\in \R^n: \text{$-1 \leq x_n\leq 0$ and $0<x_i<\varepsilon$ for $i=1,...,n-1$}\}),
\end{eqnarray*}
and defining each $\Omega_j$ and $\Gamma_j$ with $\mO$ replaced by $\widetilde \mO$ and $\Gamma^\varepsilon$ replaced by $\widetilde \Gamma^{\varepsilon}$. Theorem \ref{thm1} holds, as written, with this modification, and the construction of Theorem \ref{thm2v} follows through with trivial edits. Since the complement of $\widetilde \Gamma^\varepsilon$ is a connected Lipschitz domain, the boundary of the resultant connected domain $\Omega:= \R^n\setminus \Gamma$ is locally the graph of a Lipschitz function at all $x$ except the single point $x\in \Gamma\cap \partial \Omega$ that is not in $\Gamma_j$ for some $j\in \N$ (see \eqref{eq:onep}). This demonstrates that it is enough for $\partial \Omega$ to fail to be locally a Lipschitz graph at a single point for arbitrarily fast growth of $C_{k,R}$, through some sequence, as a function of $k$, to be achievable.

\subsection{Outline of the paper} Let us briefly describe the remainder of the paper. 
Section \ref{sec:fs} recalls function space notations and definitions. Section \ref{sec:weak} recalls a standard variational formulation of the scattering problem and its associated sesquilinear form, and relates the inf-sup constant of that form to the norms $C_{k,R}$ and $C'_{k,R}$ of the cut-off resolvent that we have introduced in \eqref{eq:cutoff} and \eqref{eq:Cdashdef}. The results in \S\ref{sec:weak} are known in dimensions $n=2,3$, but the extension to $n>3$ appears to be new in part. In \S\ref{sec:mult} we state and prove, with explicit constants, a special case of the multiplicative trace inequality.  
In \S\ref{sec:main2} the results of \S\ref{sec:prelim} are applied to prove our main results, Theorems \ref{thm1}, \ref{thm2}, and \ref{thm2v}. (Recall that Theorem \ref{thm0} is an immediate corollary of Theorems \ref{thm1} and \ref{thm2}.) In the appendix we prove properties of radiating solutions of the Helmholtz equation that are well known in dimensions $n=2,3$, but are not yet fully established for dimensions $n>3$.

\section{Preliminaries} \label{sec:prelim}

\subsection{Function spaces} \label{sec:fs}
For definiteness, we note our (largely standard) function space notations. Given a domain (i.e., a non-empty open set) $\Omega\subset\mathbb{R}^n$, 
$$
H^1(\Omega):= \{u\in L^2(\Omega):\nabla u\in (L^2(\Omega))^n\},
$$
where $\nabla u$ denotes the weak or distributional gradient.  We recall that $H^1(\Omega)$ is a Hilbert space equipped with the norm $\|\cdot\|_{H^1(\Omega)}$ defined by
\begin{equation*}
\|v\|^2_{H^1(\Omega)} = \|v\|_{L^2(\Omega)}^2 + \| |\nabla v| \|_{L^2(\Omega)}^2.
\end{equation*}
$H^1(\Omega)$ is also a Hilbert space when equipped with the (equivalent) $k$-dependent norm \eqref{eq:H1k}, for some $k>0$.
Let $C_0^\infty(\Omega)$ denote the set of $C^\infty$ functions compactly supported in $\Omega$ and let
 $H_0^1(\Omega)$ $:=$ $\text{clos}_{H^1(\Omega)}(C^{\infty}_0(\Omega))$, the closure of $C_0^\infty(\Omega)$ in $H^1(\Omega)$, and let $H^1_0(\Delta, \Omega):=\{u\in H^1_0(\Omega): \Delta u \in L^2(\Omega)\}$. We will also use Sobolev spaces $H^s(\Gamma)$, for $|s|\leq 1$, in the case that $\Gamma$ is the bounded boundary of some Lipschitz domain, defined as in \cite{mclean2000strongly} or \cite{chandler2012numerical}. 
 
 We will use  certain spaces of compactly supported and locally integrable functions. Let $\Lcomp(\Omega)$ denote the space of functions in $L^2(\Omega)$ with bounded support.
 Let $\Lloc(\Omega)$ denote the set of functions on $\Omega$ that satisfy $u|_V\in L^2(V)$ for every bounded, measurable $V\subset \Omega$,
 and let
\begin{eqnarray*}
 H^{1,\mathrm{loc}}(\Omega)&:=& \{u\in \Lloc(\Omega):\nabla u\in (\Lloc(\Omega))^n\}\\ &=&\{u\in \Lloc(\Omega): \chi|_\Omega \,u\in H^1(\Omega), \forall \chi\in C^\infty_0(\R^n)\},\\
H_0^{1,\mathrm{loc}}(\Omega)&:=&\{u\in H^{1,\mathrm{loc}}(\Omega): \chi|_\Omega\, u\in H_0^1(\Omega), \forall \chi\in C^\infty_0(\mathbb{R}^n)\},
\end{eqnarray*}
and
$$
\Hol_0(\Delta, \Omega):=\{u\in \Hol_0(\Omega): \chi|_{\Omega}u\in H^1_0(\Delta, \Omega), \forall \chi\in C_0^\infty(\mathbb{R}^n)\}.
$$

\subsection{A weak formulation of the scattering problem} \label{sec:weak}

Key to our arguments will be a standard reformulation (e.g., \cite[Theorem 2.6.6]{Ned}) of our scattering problem as a variational problem in $\Omega_R=\Omega \cap B_R$, for some $R>R_\Gamma= \max_{x\in \Gamma}|x|$. This variational formulation is set in the closed subspace $V_R\subset H^1(\Omega_R)$ defined by
\begin{equation}
V_R:=\{v|_{\Omega_R}: v\in H_0^1(\Omega) \},
\end{equation}
which we equip with the norm $\|\cdot\|_{H^1_k(\Omega_R)}$.
Note that, where $u\in \Hol_0(\Delta,\Omega)$ is the solution to the scattering problem, $u_R:= u|_{\Omega_R}\in V_R$, for $R>R_\Gamma$. 

For $R>R_\Gamma$ and $k>0$ let $\Gamma_R:= \partial B_R=\{x\in \R^n:|x|=R\}$ and let $\gamma_R:V_R\to H^{1/2}(\Gamma_R)$  denote the standard trace operator. In the appendix we recall that there is a well-defined Dirichlet to Neumann map $\DTN:H^{1/2}(\Gamma_R)\to H^{-1/2}(\Gamma_R)$ for the domain $G_R:= \{x\in \R^n:|x|>R\}$ exterior to $B_R$. This maps $\psi\in H^{1/2}(\Gamma_R)$ to the normal derivative $\partial_r w\in H^{-1/2}(\Gamma_R)$ of the unique solution $w\in H^{1,\mathrm{loc}}(G_R)$ of the homogeneous Helmholtz equation in $G_R:= \{x\in \R^n:|x|>R\}$ that satisfies the radiation condition \eqref{eq:src} and the Dirichlet boundary condition $\gamma_R^+w=\psi$, where $\gamma_R^+:H^{1,\mathrm{loc}}(G_R)\to H^{1/2}(\Gamma_R)$ is the exterior trace operator. Note that if $u\in \Hol_0(\Delta,\Omega)$ satisfies the scattering problem, then $u|_{G_R}$ satisfies this Dirichlet problem with $\psi = \gamma^+_R u = \gamma_R u$, so that 
\begin{equation} \label{eq:DtN2}
\left.\frac{\partial u}{\partial r}\right|_{\Gamma_R} = \DTN \gamma_R u.
\end{equation}
For $R>R_\Gamma$ and $k>0$ define the bounded sesquilinear form $a_{k,R}:V_R\times V_R\rightarrow \mathbb{C}$ by
\begin{equation} \label{eq:ses}
a_{k,R}(v,w):=\int_{\Omega_R} (\nabla v\cdot \nabla \overline{w}-k^2 v\overline{w})\,\rd x- \langle \DTN \gamma_R v, \gamma_R w\rangle, \qquad v,w\in V_R,
\end{equation}
where $\langle\cdot,\cdot\rangle$ denotes the duality pairing on $H^{-1/2}(\Gamma_R)\times H^{1/2}(\Gamma_R)$.

To obtain the variational formulation we choose $R>R_\Gamma$ so that $\supp(f)\subset \Omega_R$. 
We multiply \eqref{eq:he}, restricted to $\Omega_R$, by $\overline{v_R}$, the complex conjugate of some $v_R\in V_R$, and apply the divergence theorem. This leads, using \eqref{eq:DtN2}, to the variational formulation that $u_R:= u|_{\Omega_R}\in V_R$ satisfies
\begin{equation} \label{eq:var}
a_{k,R}(u_R,v_R) = G(v_R), \qquad \forall v_R\in V_R,
\end{equation} 
where the continuous anti-linear functional $G:V_R\to \C$ is given by 
$$
G(v_R):= (f,v_R)_{L^2(\Omega_R)}, \qquad v_R\in V_R,
$$ 
and $(\cdot,\cdot)_{L^2(\Omega_R)}$ is the standard inner product on $L^2(\Omega_R)$. We have shown one part of the following standard result (e.g., \cite{Ned}, \cite[Lemma 2.4]{Sp23}). 

\begin{theorem} \label{thm:equiv} If $u\in H^{1,\mathrm{loc}}_0(\Delta,\Omega)$ satisfies the scattering problem then, for all $R>R_\Gamma$ with $\supp(f)\subset \Omega_R$, $u_R:= u|_{\Omega_R}\in V_R$ satisfies the variational problem \eqref{eq:var}. Conversely, if, for some $R>R_\Gamma$ with $\supp(f)\subset \Omega_R$, $u_R\in V_R$ satisfies \eqref{eq:var} and we set $u(x):=u_R(x)$, $x\in \Omega_R$, $u(x):= w(x)$, $x\in G_R$, where $w$ denotes the unique solution of the Dirichlet problem in $G_R$ with data $\gamma^+_Rw=\gamma_Ru_R$, then   $u\in H^{1,\mathrm{loc}}_0(\Delta,\Omega)$ and satisfies the scattering problem.
\end{theorem}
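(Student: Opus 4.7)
The forward direction is essentially already proved in the discussion preceding the theorem (multiply $\Delta u + k^2 u = -f$ in $\Omega_R$ by $\overline{v_R}$ with $v_R\in V_R$, integrate by parts, and use \eqref{eq:DtN2} to rewrite the boundary term on $\Gamma_R$ as $\langle \DTN \gamma_R u,\gamma_R v_R\rangle$). So the plan focuses on the converse.

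Given $u_R\in V_R$ solving \eqref{eq:var}, with $w$ the radiating Dirichlet solution in $G_R$ with data $\gamma_R^+ w=\gamma_R u_R$, and $u$ defined by gluing $u_R$ and $w$, my first step is to show that $u\in H^{1,\mathrm{loc}}_0(\Omega)$. Since $u_R\in V_R$, it extends by definition to some $\tilde u\in H^1_0(\Omega)$; on the other hand $w\in H^{1,\mathrm{loc}}(G_R)$, and its exterior trace matches the interior trace of $u_R$ on $\Gamma_R$ by construction. A standard interface lemma (matching of traces across a Lipschitz interface implies $H^1$-regularity across it) then yields $u\in H^{1,\mathrm{loc}}(\Omega)$, and the fact that localisations $\chi u$ lie in $H^1_0(\Omega)$ for every $\chi\in C_0^\infty(\R^n)$ follows by combining the $H^1_0(\Omega)$ membership of the extension of $u_R$ with the fact that $w$ is supported away from $\partial\Omega$.

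Next I verify that $u$ satisfies the inhomogeneous Helmholtz equation weakly in $\Omega$. For test functions $\phi\in C_0^\infty(\Omega)$ with $\supp(\phi)\subset \Omega_R$, equation \eqref{eq:var} (applied with $v_R=\phi|_{\Omega_R}$, for which $\gamma_R v_R=0$) gives directly $\int_{\Omega_R}(\nabla u\cdot\nabla\overline\phi - k^2 u\overline\phi)\,\rd x = \int_{\Omega_R} f\overline\phi\,\rd x$, i.e.\ $(-\Delta-k^2)u=f$ in $\Omega_R$. For $\phi\in C_0^\infty(\Omega)$ with $\supp(\phi)\subset G_R$ we get the homogeneous Helmholtz equation from the fact that $w$ solves it in $G_R$. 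The delicate case is a test function $\phi\in C_0^\infty(\Omega)$ crossing $\Gamma_R$; splitting its integral over $\Omega$ as $\int_{\Omega_R}+\int_{G_R}$, applying \eqref{eq:var} to the $\Omega_R$ piece (with $v_R=\phi|_{\Omega_R}\in V_R$) generates the boundary contribution $-\langle \DTN\gamma_R u,\gamma_R\phi\rangle$, while Green's identity for $w$ in $G_R$ against $\phi$ generates the opposite contribution $+\langle \partial_r w,\gamma_R^+\phi\rangle = +\langle \DTN\gamma_R u,\gamma_R\phi\rangle$ by the very definition of $\DTN$. These cancel and leave only $\int_\Omega f\overline\phi\,\rd x$; density of $C_0^\infty(\Omega)$ in an appropriate test space then extends this to all compactly supported $H^1_0$ test functions. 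Consequently $\Delta u = -f - k^2 u \in \Lloc(\Omega)$, so $u\in H^{1,\mathrm{loc}}_0(\Delta,\Omega)$.

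Finally, the Sommerfeld radiation condition \eqref{eq:src} holds for $u$ because $u=w$ on $G_R$ and $w$ satisfies it by construction via the DtN map recalled in the appendix. Thus $u$ solves the scattering problem. The main technical obstacle in this plan is the rigorous cancellation of the two boundary duality pairings on $\Gamma_R$ for test functions whose support crosses $\Gamma_R$: one has to apply Green's identity in $G_R$ to $w$ (an $H^{1,\mathrm{loc}}$ function satisfying a homogeneous Helmholtz equation) and manipulate the resulting $H^{\pm 1/2}(\Gamma_R)$ duality pairing so as to recognise $\partial_r w$ as $\DTN \gamma_R u_R$; this requires the extension to arbitrary $n\geq 2$ of the standard regularity and trace results for radiating exterior solutions established in the appendix (cf.\ Lemma \ref{lem:2.1}).
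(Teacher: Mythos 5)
Your proposal is correct. The forward direction is exactly the derivation the paper gives in the text immediately preceding the theorem (multiply by $\overline{v_R}$, apply the divergence theorem, use \eqref{eq:DtN2}); for the converse the paper offers no proof at all, simply labelling the result ``standard'' and citing \cite{Ned} and \cite[Lemma 2.4]{Sp23}, so your gluing argument supplies a proof the paper omits. Your argument is the standard one and is sound: the only points needing care are the ones you already flag, namely (a) the interface matching of traces on $\Gamma_R$ giving $u\in H^{1,\mathrm{loc}}(\Omega)$, together with a cutoff $\psi\in C_0^\infty(B_R)$ equal to $1$ near $\Gamma$ to reduce the $H^{1,\mathrm{loc}}_0$ claim to the $H^1_0(\Omega)$ membership of the extension of $u_R$; and (b) the cancellation of the two $\Gamma_R$ duality pairings for test functions crossing $\Gamma_R$, where the sign comes from the outward normal of $G_R$ being $-x/|x|$ on $\Gamma_R$ (your narration attributes the signs the opposite way round, but the cancellation is correct). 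Note also that for the distributional statement $\Delta u+k^2u=-f$ it suffices to test against $C_0^\infty(\Omega)$, so the final density step you mention is not actually needed.
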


This result, and that the scattering problem is well-posed, imply that the variational problem \eqref{eq:var} has at most one solution for every $G\in V_R^*$, where $V_R^*$ is the dual space of $V_R$. Existence follows by Fredholm theory by showing that $a_{k,R}(\cdot, \cdot)$ is the sum of coercive and compact sesquilinear forms (see, e.g., the discussion in \cite[\S2.2]{CWHeMoBe:21}). (We say that a sesquilinear form $b(\cdot,\cdot)$ on $V_R$ is compact if the associated operator $B:V_R\to V_R^*$, defined by $Bv(w):=b(v,w)$, $v,w\in V_R$, is compact.) We write  $a_{k,R}(\cdot,\cdot)=\widetilde a_{k,R}(\cdot,\cdot) + b_{k,R}(\cdot,\cdot)$, where $b_{k,R}(v,w) := -2k^2(v,w)_{L^2(\Omega_R)}$, $v,w\in V_R$, so that $b_{k,R}(\cdot,\cdot)$ is compact by the compactness of the embedding of $V_R$ in $L^2(\Omega_R)$. Further,
\begin{equation} \label{eq:coer}
\mathrm{Re}\, \widetilde a_{k,R}(v,v) = \int_{\Omega_R} (|\nabla v|^2 + k^2 |v|^2)\,\rd x- \mathrm{Re}\, \langle \DTN \gamma_R v, \gamma_R v\rangle \geq \|v\|_{H^1_k(\Omega_R)}^2,
\end{equation}
 for all $v\in V_R$, by \eqref{eq:21one} in Lemma \ref{lem:2.1}, so that $\widetilde a_{k,R}(\cdot, \cdot)$ is coercive. We note that \eqref{eq:coer} and this decomposition of $a_{k,R}(\cdot,\cdot)$ are standard for $n=2,3$ (e.g., \cite[p.~1441]{CWMonk2008}, \cite[Lemma 2.6]{Sp23}). The inequality  \eqref{eq:coer} may be stated for the first time here for $n>3$, since, to the best of our knowledge, the second inequality in \eqref{eq:21one} is not previously established except for $n=2,3$.

 Let $\beta_{k,R}$ denote the {\em inf-sup constant} of $a_{k,R}(\cdot,\cdot)$, defined as
\begin{equation} \label{eq:infsup}
\beta_{k,R}:=\inf_{0\neq v\in V_R}\sup_{0\neq w\in V_R} \frac{\lvert a_{k,R}(v,w)\rvert}{\lVert v\rVert_{H^1_k(\Omega_R)}\lVert w\rVert_{H^1_k(\Omega_R)}}.
\end{equation}
 The well-posedness of \eqref{eq:var} implies that $\beta_{k,R}>0$, for $k>0$, $R>R_\Gamma$ (see, e.g., \cite[Remark 2.20]{Ih:98}). Further, by a standard property of bounded sesquilinear forms  (e.g., \cite[Theorem 2.1.44]{SaSch}), $\beta_{k,R}^{-1}=\widetilde C_{k,R}$, where $\widetilde C_{k,R}$ is the smallest constant such that
 \begin{equation} \nonumber
 \|u_R\|_{H^1_k(\Omega_R)} \leq \widetilde C_{k,R} \|G\|_{V_R^*}, \qquad G\in V_R^*,
 \end{equation}
 where $u_R\in V_R$ denotes the solution to \eqref{eq:var} for the  functional $G$ (cf.~\cite[Lemma 3.3]{CWMonk2008}).
 Thus, and by \cite[Lemma 3.4]{CWMonk2008}, we have (see \cite[Lemma 5.1]{chandler2020high}) that
\begin{equation} \label{eq:is}
kC_{k,R}'/\min(1,kc_R) \leq \beta_{k,R}^{-1}= \widetilde C_{k,R} \leq  1+2k C_{k,R}', \qquad k>0, \quad R>R_\Gamma,
\end{equation}
where $C_{k,R}'$ is the cut-off resolvent norm \eqref{eq:Cdashdef} and\footnote{We remark that $c_R=\lambda_Z^{-1/2}$, where $\lambda_Z$ is the first eigenvalue for the Laplacian with Dirichlet boundary conditions on $\Gamma$, Neumann on $\Gamma_R$ (e.g., \cite[Equation (3.1.21)]{levitin2023topics}), and that $c_R\geq \sup_{0\neq v\in H_0^1(B_{R'})}\|v\|_{L^2(B_{R'})}/\| |\nabla v| \|_{L^2(B_{R'})} = R'/j_{n/2-1,1}$, where $R'$ denotes the radius of the largest ball contained in $\Omega_R$, and $j_{\nu,1}$ denotes the first positive zero of the Bessel function $J_\nu$ (see, e.g., \cite[Exercise 1.2.21, Theorem 3.1.9]{levitin2023topics} for the last equality).}
$$
c_R := \sup_{0\neq v\in V_R}\frac{\|v\|_{L^2(\Omega_R)}}{\| |\nabla v| \|_{L^2(\Omega_R)}}.
$$ (The proof of \cite[Lemma 3.4]{CWMonk2008} is for dimensions $n=2,3$, but carries over to all $n\geq 2$ now that \eqref{eq:21one} and \eqref{eq:coer} are available for $n>3$.)

Further, if $u$ is the solution of the scattering problem and $u_R:= u|_{\Omega_R}$, it follows from Theorem \ref{thm:equiv} and \eqref{eq:coer} that
$$
\mathrm{Re}\, (f,u_R)_{L^2(\Omega_R)} = \mathrm{Re}\,  a_{k,R}(u_R,u_R) \geq \|u_R\|_{H^1_k(\Omega_R)}^2 - 2k^2 \|u_R\|_{L^2(\Omega_R)}^2, \;\; k>0, \; R>R_\Gamma.
$$
From this inequality we obtain, by Cauchy-Schwarz, that \eqref{eq:equiv} holds. This, together with \eqref{eq:is}, implies in turn that
\begin{equation}\label{infsup}
\beta_{k,R}^{-1}\leq 1+2k\sqrt{2k^2(C_{k,R})^2 +C_{k,R}}, \qquad k>0, \quad R>R_\Gamma.
\end{equation}

Note that bounds on any one of the inf-sup constant $\beta_{k,R}$ and the resolvent norms $C_{k,R}$ and $C_{k,R}'$ can be translated into bounds on the other two quantities, via \eqref{eq:equiv} and \eqref{eq:is}. In particular, we will use \eqref{infsup}, derived from these inequalities, in our proof of Theorem \ref{thm1} below. 

\subsection{The multiplicative trace inequality in a cube} \label{sec:mult}
Our arguments, specifically our proof of Theorem \ref{thm1}, need the following special case, with explicit constants, of the standard multiplicative trace inequality (e.g., \cite[Theorem 1.5.1.10, last formula on p. 41]{Grisvard}). In this lemma and its proof $\gamma:H^1(\mO)\to L^2(\partial \mO)$ is the standard trace operator.
\begin{lemma} \label{lem:mult}
Let $a>0$, $\mathcal{O}:=(0,a)^n\subset \mathbb{R}^n$ and $v\in H^1(\mathcal{O})$ with $\mathrm{supp}(\gamma\, v)\subset [0,a]^{n-1}\times \{0\}$. Then
\begin{equation}\label{multtrace2}
\lVert \gamma \,v\rVert_{L^2(\partial \mathcal{O})}^2\leq  2\lVert v\rVert_{L^2(\mathcal{O})}\lVert \nabla v\rVert_{L^2(\mathcal{O})} \leq k^{-1}\|v\|^2_{H^1_k(\mathcal{O})}, \quad k>0.
\end{equation}
\end{lemma}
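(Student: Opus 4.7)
The plan is to exploit the one-dimensional fundamental theorem of calculus (FTC) in the $x_n$-direction, using crucially that the support hypothesis forces the trace of $v$ to vanish on the top face $[0,a]^{n-1}\times\{a\}$. The second inequality in \eqref{multtrace2} is then a routine consequence of Young's inequality.

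First I would invoke the standard slicing result for Sobolev functions: for $v\in H^1(\mathcal{O})$, Fubini's theorem implies that, for almost every $x'\in(0,a)^{n-1}$, the one-dimensional slice $t\mapsto v(x',t)$ lies in $H^1(0,a)$, and hence admits an absolutely continuous representative on $[0,a]$ whose pointwise values at $t=0$ and $t=a$ coincide with $\gamma v(x',0)$ and $\gamma v(x',a)$ respectively. Since $\mathrm{supp}(\gamma v)\subset[0,a]^{n-1}\times\{0\}$, we have $\gamma v(x',a)=0$ for almost every $x'$. Applying the FTC to $t\mapsto |v(x',t)|^2$ on $[0,a]$, this yields, for almost every $x'\in(0,a)^{n-1}$,
\begin{equation*}
|\gamma v(x',0)|^2 = -\int_0^a \frac{d}{dt}|v(x',t)|^2\,\rd t = -2\,\rea\int_0^a \overline{v(x',t)}\,\partial_n v(x',t)\,\rd t.
\end{equation*}

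Next I would integrate this pointwise identity over $x'\in(0,a)^{n-1}$, and combine the triangle inequality, Fubini's theorem, and the Cauchy--Schwarz inequality to obtain
\begin{equation*}
\int_{(0,a)^{n-1}}|\gamma v(x',0)|^2\,\rd x' \le 2\int_{\mathcal{O}}|v|\,|\partial_n v|\,\rd x \le 2\|v\|_{L^2(\mathcal{O})}\|\partial_n v\|_{L^2(\mathcal{O})}.
\end{equation*}
Since $\gamma v$ is supported on the single face $[0,a]^{n-1}\times\{0\}$, the left-hand side equals $\|\gamma v\|_{L^2(\partial\mathcal{O})}^2$, and $\|\partial_n v\|_{L^2(\mathcal{O})}\le\|\nabla v\|_{L^2(\mathcal{O})}$; this establishes the first inequality.

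For the second inequality, I would appeal to Young's inequality in the form $2AB\le k^{-1}A^2+kB^2$, valid for $A,B\ge 0$ and $k>0$; applied with $A=\|\nabla v\|_{L^2(\mathcal{O})}$, $B=\|v\|_{L^2(\mathcal{O})}$, this gives
\begin{equation*}
2\|v\|_{L^2(\mathcal{O})}\|\nabla v\|_{L^2(\mathcal{O})} \le k^{-1}\bigl(\|\nabla v\|_{L^2(\mathcal{O})}^2+k^2\|v\|_{L^2(\mathcal{O})}^2\bigr) = k^{-1}\|v\|_{H^1_k(\mathcal{O})}^2.
\end{equation*}
The only subtlety is the Fubini/absolute-continuity-on-lines justification of the FTC step, but this is standard for $H^1$ functions on a cube, so no serious obstacle is anticipated; the remainder is direct manipulation.
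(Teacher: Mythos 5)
Your proof is correct and follows essentially the same route as the paper's: the fundamental theorem of calculus in the $x_n$-direction (using that the trace vanishes on the top face $t=a$), then Fubini and Cauchy--Schwarz for the first inequality, and Young's inequality (equivalent to the paper's completing-the-square identity) for the second. The only difference is a technical one: you justify the slicing step directly via absolute continuity on lines for $H^1$ functions, whereas the paper argues first for $v\in C^1(\overline{\mathcal{O}})$ and then passes to general $v$ by density and continuity of the trace operator.
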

\begin{proof}
The proof here adapts the argument in \cite{BrSc:08}, where an analogous  bound is shown  for the unit disk in $\mathbb{R}^2$ (and cf.~\cite{Grisvard}). 
Suppose that $v\in C^1(\overline{\mathcal{O}})$. Using coordinates $x=(y,t)$, where $y\in [0,a]^{n-1}\subset \mathbb{R}^{n-1}$ and $t\in \mathbb{R}$,
\begin{equation*}
\lvert v(y,0)\rvert^2=-\int_0^a \frac{\partial}{\partial t}\lvert v(y,t)\rvert^2\,\rd t=-2\mathrm{Re}\,\bigg(\int_0^a \overline{v(y,t)}\frac{\partial v}{\partial t}(y,t)\,\rd t\bigg).
\end{equation*}
Since $\mathrm{supp}(\gamma\, v)\subset [0,a]^{n-1}\times \{0\}$, integrating the above identity over $[0,a]^{n-1}$ gives that
\begin{equation*}
\begin{split}
\lVert \gamma\,v\rVert_{L^2(\partial \mathcal{O})}^2&= -2\mathrm{Re}\,\bigg(\int_{[0,a]^{n-1}}\int_0^a \overline{v(y,t)} \frac{\partial v}{\partial t}(y,t)\,\rd t\rd y\bigg)\\
&\leq 2\int_{\mathcal{O}} \lvert v(x)\rvert \lvert \nabla v(x)\rvert\,\rd x \leq 2 \lVert v\rVert_{L^2(\mathcal{O})} \lVert \nabla v\rVert_{L^2(\mathcal{O})}.
\end{split}
\end{equation*}
 The first inequality in \eqref{multtrace2} follows from the above inequality by the density of $C^1(\overline{\mathcal{O}})$ in  $H^1(\mathcal{O})$ and the continuity of $\gamma$. The second inequality is elementary.
\end{proof}
\section{Proofs of the main results} \label{sec:main2}
\subsection{Proof of Theorem \ref{thm1}} \label{sec:thm1}
We turn now to the proof of Theorem \ref{thm1}, one of our main results. In the proof we obtain, for each $j\in \N$ and $R>R_\Gamma$, the desired lower bound on $C_{k_j,R}$, the norm of the cut-off resolvent, by constructing a quasimode $u_j\in V_R$ for the weak formulation \eqref{eq:var}, by which we mean a $u_j\in V_R$ for which 
\begin{equation} \label{eq:quant}
\sup_{0\neq v\in V_R}\frac{|a_{k_j,R}(u_j,v)|}{\|u_j\|_{H^1_{k_j}(\Omega_R)}\|v\|_{H^1_{k_j}(\Omega_R)}}
\end{equation}
is sufficiently small.  The smallness of this quantity is guaranteed by our choice of $u_j$ and our choice \eqref{gaplength} of $\varepsilon_j$, and established with the help of the mutiplicative trace inequality, Lemma  \ref{lem:mult}. The upper bound we obtain for \eqref{eq:quant} implies an upper bound on the inf-sup constant $\beta_{k_j,R}$, which leads, via \eqref{infsup}, to a lower bound on $C_{k_j,R}$. Our construction of the quasimode $u_j$ is simply to extend the first eigenfunction of the Dirichlet Laplacian in the cube $O_j$ by zero to $\Omega_R$.

As we will see below the above method of argument leads to a short proof for Theorem \ref{thm1} which provides the explicit expression \eqref{gaplength} for the $\varepsilon_j$. An anonymous reviewer has encouraged us to point out a variation on our argument and construction that leads to a modified version of Theorem \ref{thm1} with a different expression for $\varepsilon_j$. This alternative argument starts by smoothing $u_j$ to obtain a $v_j\in \Hol_0(\Delta, \Omega)$, supported in $\overline{O_j}$, that is a solution of the scattering problem with a source term $f= f_j$, also supported in $\overline{O_j}$, so that $C_{k_j,R}\geq \|v_j\|_{L^2(\Omega_R)}/\|f_j\|_{L^2(\Omega_R)}$, by \eqref{eq:resest}. Given any increasing sequence $(a_j)$ it is possible to guarantee $\|v_j\|_{L^2(\Omega_R)}/\|f_j\|_{L^2(\Omega_R)}\geq a_j$, so that $C_{k_j,R}\geq a_j$,  by taking $\varepsilon_j$ sufficiently small, so that $(v_j)$ is a sequence of quasimodes for the original scattering problem, in the sense, e.g., of  \cite[\S7.3]{DyZw}, just as $(u_j)$ is a sequence of quasimodes for its weak formulation. 

In more detail, choose $\chi \in C^2(\R^n)$ such that $\chi(x)= 0$ for $|x|\leq 2\sqrt{n}$, $\chi(x)=1$ for $|x|\geq 3\sqrt{n}$. Assuming, without loss of generality, that $t_j=0$, define the $j$th quasimode $v_j$ by $v_j:= \chi_ju_j$, where $\chi_j(x) := \chi(x/(\varepsilon_j\ell_j))$, $x\in \R^n$, and $u_j$ is defined by \eqref{eq:qm} below. Then $v_j$ vanishes in a neighbourhood of $\overline{\partial \mO_j\setminus \Gamma_j}$ so that it is clear that $v_j\in \Hol_0(\Delta, \Omega)$, and $f_j := -(\Delta +k_j^2)v_j = -u_j\Delta \chi_j - 2 \nabla u_j \cdot \nabla \chi_j$. Further, as $\varepsilon_j\to 0$ it holds uniformly for $x\in B_{3\sqrt{n} \varepsilon_j\ell_j}\supset \supp(f_j)$ that $\nabla \chi_j(x)=O(\varepsilon_j^{-1})$, $\Delta \chi_j(x) = O(\varepsilon_j^{-2})$, and, noting the explicit form \eqref{eq:qm} for $u_j$, that $u_j(x)=O(\varepsilon^n_j)$, and $\nabla u_j(x) = O(\varepsilon_j^{n-1})$, so that $f_j(x)=O(\varepsilon_j^{n-2})$. Thus, and since $|\supp(f_j)|_n = O(\varepsilon_j^n)$, $\|f_j\|_{L^2(\Omega_R)}=O(\varepsilon_j^{(3n-4)/2})$ as $\varepsilon_j\to 0$. Since also $\|v_j\|_{L^2(\Omega_R)}\to \|u_j\|_{L^2(\Omega_R)}$, which is independent of $\varepsilon_j$, as $\varepsilon_j\to 0$, it follows that there is a constant $C_j>0$ such that
$$
C_{k_j,R}\geq \|v_j\|_{L^2(\Omega_R)}/\|f_j\|_{L^2(\Omega_R)}\geq C_j \varepsilon_j^{-(3n-4)/2},
$$
for all sufficiently small $\varepsilon_j$ (cf.~\eqref{eq:lbnew}). 

With a little more care over these calculations one sees that the above inequalities hold if $\varepsilon_j\leq 1/(6\sqrt{n})$ and  with $C_j=k_j^{-2}C$, for some $C>0$ which depends only on $n$ and the choice of $\chi$. Thus $C_{k_j,R} \geq a_j$ if 
$$
\varepsilon_j\leq \min\left(\frac{1}{6\sqrt{n}},\left(\frac{k_j^2a_j}{C}\right)^{-2/(3n-4)}\right).
$$
Further calculation should yield an expression  for $C$ as a function of $n$ for some specific choice of $\chi$. The above formula can be compared to \eqref{gaplength}, which requires that $\varepsilon_j= O\left((k_j^2a_j)^{-2/(3n-3)}\right)$ as $j\to\infty$, a slightly slower rate of decay\footnote{It should  also be possible to obtain a lower estimate for $C_{k_j,R}$ in the limit as $\varepsilon_j\to 0$ by using rigorous Helmholtz resonator results on the asymptotics of solutions to the scattering problem, of resonances, and of associated generalised eigenfunctions, though this estimate may not have completely explicit constants.  See, in particular, \cite[\S1.4-1.5,\S2.5]{Gady}, \cite{Gady2}.}.

\begin{proof}[Proof of Theorem \ref{thm1}]
Let $\Gamma$ be as in Theorem \ref{thm1}, $R>R_\Gamma$, and $j\in \N$. Recall that the sidelength of the $j$th cube $\mO_j$ is $\ell_j=\pi \sqrt{n}k_j^{-1}$. Without loss of generality, suppose that $t_j=0$ so that $\mO_j = \ell_j\mO$,
and define the test function $u_j\in V_R$ by
\begin{equation} \label{eq:qm}
u_j(x)=\begin{cases}
\prod_{i=1}^n \sin\left(\frac{k_j}{\sqrt{n}} x_i\right), \hspace{0.2cm} x=(x_1,...,x_n)\in \mathcal{O}_j,\\
0, \hspace{0.2cm} x\in \Omega_R\setminus \mathcal{O}_j.
\end{cases}
\end{equation}
Then the sesquilinear form \eqref{eq:ses}, with $u_j$ as the first argument and $v\in V_R$ as the second, reduces to
\begin{equation}
a_{k,R}(u_j,v)=\int_{\mathcal{O}_j} (\nabla u_j\cdot \nabla \overline{v}-k^2 u_j\overline{v})\,\rd x.
\end{equation}
Furthermore, by Green's theorem and the fact that $(\Delta+k_j^2)u_j=0$ in $\mathcal{O}_j$ and $\gamma\,v=0$ on $\Gamma_j$, where $\gamma:H^1(\mO_j)\to L^2(\partial \mO_j)$ is the trace operator, it follows that
\begin{equation}\label{harmony}
|a_{k_j,R}(u_j,v)|=\left|\int_{\partial\mathcal{O}_j\setminus \Gamma_j} \partial_\nu u_j \overline{\gamma\, v}\,\rd s\right| \leq \lVert \partial_\nu u_j\rVert_{L^2(\partial\mathcal{O}_j\setminus \Gamma_j)}\lVert \gamma\, v\rVert_{L^2(\partial \mathcal{O}_j)}, 
\end{equation}
by Cauchy-Schwarz.
Now
\begin{equation}
\partial_\nu u_j(x)=-\frac{k_j}{\sqrt{n}}\prod_{i=1}^{n-1}\sin\left(\frac{k_j}{\sqrt{n}}x_i\right), \quad x\in \partial\mathcal{O}_j \setminus \Gamma_j.
\end{equation}
Explicitly computing the $L^2$ norm of $\partial_\nu u_j$,
\begin{eqnarray*}
\lVert \partial_\nu u_j\rVert^2_{L^2(\partial\mathcal{O}_j\setminus \Gamma_j)} &=&\frac{1}{4^2}\bigg(\frac{\sqrt{n}}{4k_j}\bigg)^{n-3}\bigg(\frac{2k_j\ell_j\varepsilon_j}{\sqrt{n}}-\sin\left(\frac{2k_j\ell_j\varepsilon_j}{\sqrt{n}}\right)\bigg)^{n-1}\\
&\leq &\frac{1}{4^2}\bigg(\frac{\sqrt{n}}{4k_j}\bigg)^{n-3} \bigg(\frac{1}{3!}\bigg(\frac{2k_j \ell_j\varepsilon_j}{\sqrt{n}}\bigg)^3\bigg)^{n-1} = \frac{(k_j\ell_j\varepsilon_j)^{3(n-1)}}{3^{n-1}k_j^{n-3}n^n},
\end{eqnarray*}
where we have used the basic estimate $t-\sin(t)\leq \frac{t^3}{3!}$ for $t\geq 0$, which follows from Taylor's theorem with integral remainder. Another straightforward computation yields
\begin{equation*}
\lVert u_j\rVert_{H_{k_j}^1(\Omega_R)}^2=\frac{k_j^2 \ell_j^n}{2^{n-1}}.
\end{equation*}
We combine these computations with \eqref{harmony} to get a bound for the inf-sup constant \eqref{eq:infsup}, that
\begin{eqnarray*}
\beta_{k_j,R}&\leq& \sup_{0\neq v\in V_R}\frac{|a_{k_j,R}(u_j,v)|}{\|u_j\|_{H^1_{k_j}(\mathcal{O}_j)}\|v\|_{H^1_{k_j}(\mathcal{O}_j)}}\\
&\leq &\frac{2^{(n-1)/2}k_j^{1/2}(k_j\ell_j\varepsilon_j)^{3(n-1)/2}}{3^{(n-1)/2}(nk_j\ell_j)^{n/2}}\sup_{0\neq v\in V_R}\frac{\| \gamma\, v\|_{L^2(\partial \mathcal{O}_j)}}{\lVert v\rVert_{H^1_{k_j}(\mathcal{O}_j)}}.
\end{eqnarray*}
 Using Lemma \ref{lem:mult} and that $\ell_j = \pi\sqrt{n} k_j^{-1}$, we obtain that
\begin{equation} \label{eq:lbnew}
\beta_{k_j,R}\leq c_n \varepsilon_j^{3(n-1)/2},  \quad \mbox{where} \quad c_n := \frac{2^{(n-1)/2}\pi^{n-3/2}}{3^{(n-1)/2}n^{3/4}}.
\end{equation}
Next we see that the above inequality, coupled with \eqref{infsup} which relates the inf-sup constant and the cut-off resolvent norm $C_{k,R}=\lVert \chi_R R(k)\chi_R\rVert_{L^2(\Omega)\to L^2(\Omega)}$, yields
\begin{equation}\label{explicitineq}
c_n^{-1}\varepsilon_j^{-3(n-1)/2}\leq \beta_{k_j,R}^{-1} \leq 1+2k_j\sqrt{2k_j^2 (C_{k_j,R})^2+C_{k_j,R}}.
\end{equation}
Now \eqref{gaplength} implies that
\begin{equation}
c_n^{-1}\varepsilon_j^{-3(n-1)/2}=n^{3/4}\pi^{1/2}\bigg(1+2k_j\sqrt{2k_j^2 a_j^2 +a_j}\bigg).
\end{equation}
Inserting this into \eqref{explicitineq} and simplifying a little, we see that
\begin{equation}
2k_j^2a_j^2+a_j< 2k_j^2(C_{k_j,R})^2+ C_{k_j,R}.
\end{equation}
It follows that $\lVert \chi_R R(k_j)\chi_R \rVert_{L^2(\Omega)\rightarrow L^2(\Omega)}= C_{k_j,R} > a_j$.
\end{proof}
\subsection{Cube packing and the proofs of Theorems \ref{thm2} and \ref{thm2v}} \label{sec:p19}
We turn now to the proofs of Theorems \ref{thm2} and Theorem \ref{thm2v}, i.e.~to showing that conditions (i)-(iii) in Theorem \ref{thm1} can be realised by a certain configuration of the sequence of cubes, $(\mO_j)_{j\in \N}$, under the mild assumption \eqref{eq:ellb} on the rate of decrease of the sidelengths of these cubes in the case of Theorem \ref{thm2}, under the (marginally) stronger assumption \eqref{eq:ellbv} in the case of Theorem \ref{thm2v}. The proof of each theorem is constructive, providing a specification of the sequence of translations $(t_j)_{j\in \N}$ whose existence the theorem asserts. The configurations of the cubes differ significantly between the two proofs. In particular the configuration in the proof of Theorem \ref{thm2v} is such that also \eqref{eq:onep} holds (see \eqref{eq:onep2} below), relevant to our discussion of Question 2 in \S\ref{sec:fq}. 

Our first result is a version of \cite[Theorem 1]{MM68} which is a result on the packing of finite and infinite sequences of cubes inside a larger cube (or more generally, within some sufficiently large rectangular parallelepiped). We include our own version of a proof of this (beautiful) result with the aim of making explicit within the proof the choice for the translations $(t_j)_{j \in \mI}$. While most of the arguments of our proof come from \cite{MM68} it may be helpful, for comprehension, to read our proof alongside that of \cite{MM68}. In particular, we make the induction step of the proof in \cite{MM68} explicit, make clear that the base case of the induction is $n=1$, and deal more explicitly with the case when the index set $\mI$ is finite (which is needed in the induction hypothesis even if one's interest is only in infinite sequences of cubes).

\begin{theorem}[{\cite{MM68}}] \label{thm:cube}
Suppose $n\in \N$, that $\mI\subset \N$ is an index set, either $\mI=\{1,\dots,N\}$, for some $N\in \N$, or $\mI=\N$, that the sequence $(\ell_j)_{j\in \mI}\subset (0,\infty)$ is non-increasing, with $V := \sum_{j\in \mI}\ell_j^n < \infty$, and that $a> \ell_1 + (V-\ell_1^n)^{1/n}$. 
Then there exists a choice of the translations $(t_j)_{j\in \mI}\subset \R^n$ such that the cubes  $(\mO_j)_{j\in \mI}$, defined by
$\mO_j := \ell_j \mO+t_j$, for $ j\in \mI$,
have pairwise disjoint closures, i.e., $\overline{\mO_i}\cap \overline{\mO_j}=\emptyset$, for $i,j\in \mI$, $i\neq j$, and are such that $\mO_\infty := \bigcup_{j\in \mI} \mO_j\subset a\mO$. 
\end{theorem}
\begin{proof} 
The proof is by induction on $n$. Clearly the claim is true for $n=1$. Suppose that $n\in \N$ with $n\geq 2$, and, as the induction hypothesis, that the claim of the theorem is true with $n$ replaced by $n-1$. Suppose further that $\mI$,  $(\ell_j)_{j\in \mI}$, $a$, $V$,  $(\mO_j)_{j\in \mI}$ are as in the statement of the theorem. To complete the inductive step we have to show that we can then choose the translations $(t_j)_{j\in \mI}\subset \R^n$ to satisfy the required constraints on $(\mO_j)_{j\in \mI}$ of the theorem. 

For $j\in \mI$ we write $t_j = (\tilde t_j,t_{j,n})$ where $\tilde t_j\in \R^{n-1}$ and $t_{j,n}\in \R$, and write $\mO_j = \widetilde \mO_j \times [t_{j,n},t_{j,n}+\ell_j]$, where $\widetilde \mO_j :=\ell_j\widetilde \mO + \tilde t_j$ and $\widetilde \mO := \{(x_1,\ldots,x_{n-1})\in \R^{n-1}:0<x_i<1 \mbox{ for } i=1,\ldots,n-1\}$, so that $\widetilde \mO$ and $\widetilde \mO_j$, $j\in \mI$, are $(n-1)$-dimensional cubes. We choose a sequence $(d_i)_{i\in \N}\subset (0,\infty)$ such that
$$
\epsilon:= \sum_{i=1}^\infty d_i< a- \ell_1 - (V-\ell_1^n)^{1/n}.
$$
and note that
\begin{equation} \label{eq:equi}
a> \epsilon + \ell_1 + (V-\ell_1^n)^{1/n} \qquad \Leftrightarrow \qquad a>\epsilon+\ell_1 \quad \mbox{and} \quad V < \ell_1^n+(a-\epsilon-\ell_1)^n.
\end{equation}

We make the following recursive definition of a sequence $(B_i)_{i\in \mJ}\subset \mI$, where $\mJ=\{1,\ldots,M\}$, for some $M\in \N$, or $\mJ=\N$ (in which case we put $M=+\infty$). We set $B_1=1$. If $i\in \mJ$ then $i+1\in \mJ$ as long as 
\begin{equation} \label{eq:fin}
\sum_{\overset{\scriptstyle j\in \mI}{\scriptstyle j\geq B_i}} \ell_j^{n-1} \geq \ell_{B_i}^{n-1} + (a-\epsilon-\ell_{B_i})^{n-1},
\end{equation}
in which case we define $B_{i+1}>B_i$ as the unique positive integer satisfying
\begin{equation} \label{eq:Bdef}
\sum_{j=B_i}^{B_{i+1}-1}\ell_j^{n-1} < \ell_{B_i}^{n-1} + (a-\epsilon-\ell_{B_i})^{n-1} \leq \sum_{j=B_i}^{B_{i+1}}\ell_j^{n-1}.
\end{equation}
If \eqref{eq:fin} does not hold for some $i\in \mJ$, then $\mJ$ is finite, $M=i$, and 
\begin{equation} \label{eq:Bdef2}
\sum_{j\in I_M}\ell_j^{n-1} < \ell_{B_M}^{n-1} + (a-\epsilon-\ell_{B_M})^{n-1},
\end{equation}
where $I_M :=\{j\in \mI: j\geq B_M\}$. For $i\in \mJ$ with $i<M$ we put $I_i := \{B_i,\ldots,B_{i+1}-1\}$. 

We
place the cubes $\mO_j$ in layers parallel to the hyperplane $x_n=0$, indexed by the set $\mJ$, placing the cubes $\{\mO_j:j\in I_i\}$ in layer $i$, for $i\in \mJ$, setting
\begin{equation} \label{eq:tjn}
t_{j,n} = H_i, \qquad j\in I_i, \quad i\in \mJ, 
\end{equation} 
where 
$$
H_1 := 0, \quad H_{i} := H_{i-1} + \ell_{B_{i-1}} + d_{i-1}, \quad \mbox{for } i\in \mJ, \;\; i\neq 1. 
$$
Note that \eqref{eq:tjn}, and the definition of $H_i$ for $i\in \mJ$, imply that $\overline{\mO_j}\cap\overline{\mO_{j'}}=\emptyset$ if $j\in I_i$, $j'\in I_{i'}$, and $i,i'\in \mJ$ with $i< i'$. For if $x=(x_1,\ldots,x_n)\in \overline{\mO_j}$ and $x'=(x_1',\ldots,x_n')\in \overline{\mO_{j'}}$, then $x_n'\geq t_{j',n}=H_{i'}\geq H_{i+1}=H_i+\ell_{B_{i}}+d_i\geq t_{j,n}+\ell_j+d_i>x_{n}$.

Further, by the inductive hypothesis, \eqref{eq:equi}, and the left-hand inequality in \eqref{eq:Bdef} (and \eqref{eq:Bdef2} in the case that $\mJ$ is finite),  for each $i\in \mJ$ there exists a choice of the translations $(\tilde t_j)_{j\in I_i}\subset \R^{n-1}$ such that the cubes  $(\widetilde \mO_j)_{j\in I_i}$ have pairwise disjoint closures and $\bigcup_{j\in I_i} \widetilde \mO_j\subset (a-\epsilon)\widetilde \mO \subset a\widetilde \mO$. Making, for each $i\in \mJ$, this choice for $(\tilde t_j)_{j\in I_i}$, it follows that $\overline{\mO_j}\cap\overline{\mO_{j'}}=\emptyset$ if $j,j'\in I_i$ with $j\neq j'$. Thus the cubes $(\mO_j)_{j\in \mI}$ have pairwise disjoint closures. Further, our choice for $(\tilde t_j)_{j\in \mI}$ ensures that $\widetilde \mO_j \subset a\widetilde\mO$   for $j\in \mI$, so that $\mO_\infty \subset a\mO$ as long as $0\leq t_{j,n} \leq a-\ell_j$, for each $j\in \mI$, which holds if and only if $H_i+\ell_{B_i} \leq a$, for all $i\in \mJ$, which holds if $\sum_{i\in \mJ} (\ell_{B_i}+d_i) \leq a$. 

To see that this last bound holds, let $V_i$ be the volume of the cubes in layer $i$. Then, for $i\in \mJ$ with $i<M$ (so for all $i\in\mJ$ in the case that $\mJ=\N$ and $M=+\infty$),
$$
V_i = \sum_{j=B_i}^{B_{i+1}-1} \ell_j^n \geq \ell_{B_i}^n + \ell_{B_{i+1}}\sum_{j=B_i+1}^{B_{i+1}-1} \ell_j^{n-1} \geq \ell_{B_i}^n + \ell_{B_{i+1}}(a-\ell_{B_i}-\epsilon)^{n-1} - \ell_{B_{i+1}}^n,
$$
by the right-hand inequality of \eqref{eq:Bdef}. Noting \eqref{eq:equi} and that $\ell_{B_i}\leq \ell_{B_1}=\ell_1$, for $i\in \mJ$, and, in the case that $\mJ$ is finite, that $V_M\geq \ell_{B_M}^n$, it follows that 
$$
\ell^n_1 + (a-\ell_1-\epsilon)^{n}> V = \sum_{i\in \mJ} V_i \geq \ell_1^n + (a-\epsilon-\ell_1)^{n-1}\sum_{\overset{\scriptstyle i\in \mJ}{\scriptstyle i<M}} \ell_{B_{i+1}}.
$$
Thus
$$
\sum_{\overset{\scriptstyle i\in \mJ}{\scriptstyle i<M}} \ell_{B_{i+1}} \leq a-\ell_1 -\epsilon,
$$
so that $\sum_{i\in \mJ} (\ell_{B_{i}}+d_i) \leq a$.
\end{proof}

\begin{algorithm}
\caption{{\sc ComputeTranslations}}
\label{alg:trans}
\begin{algorithmic}[1]
\STATE \textbf{Inputs:} {\em The space dimension $n\in \N$; a sequence of cube side-lengths $(\ell_j)_{j\in \mI}\subset (0,\infty)$ (where $\mI:=\{N_1,N_1+1,\ldots\}$ for some $N_1\in \N$, or $\mI:=\{N_1,\ldots,N_2\}$, for some $N_1,N_2\in \N$ with $N_2\geq N_1$) which satisfy $V:=\sum_{j\in \mI}\ell_j^n<\infty$ and $\ell_{j'}\leq \ell_j$, if $j,j'\in \mI$ with $j'\geq j$; a larger cube side-length $a > \ell_{N_1} + (V-\ell_{N_1}^n)^{1/n}$.}
\STATE \textbf{Output:} {\em A sequence of translation vectors $(t_j)_{j\in \mI}\subset \R^n$ such that $\overline{\mO_j}\cap \overline{\mO_{j'}}=\emptyset$, if $j,j'\in \mI$ with $j\neq j'$, and $\mO_\infty\subset a\mO$, where $(\mO_j)_{j\in \mI}$ is the sequence of cubes defined by $\mO_j := \ell_j \mO + t_j$, $j\in \mI$, and  $\mO_\infty := \bigcup_{j\in \mI} \mO_j$, which has total volume $V$.}
\STATE{Choose $(d_i)_{i\in\N}\subset(0,\infty)$ such that
$\epsilon:=\sum_{i=1}^\infty d_i<a-\ell_{N_1}
-\bigl(V-\ell_{N_1}^n\bigr)^{1/n}$.}
\IF{$n=1$}
\STATE $j:= N_1$, $t_j := 0$
\WHILE{$j+1\in \mI$}
\STATE{$j:= j+1$}
\STATE{$t_j:= t_{j-1}+\ell_{j-1} + d_j$}
\ENDWHILE
\ELSE
\STATE{$i:= 1$, $B_i := N_1$, $\mJ := \{1\}$}
\WHILE{equation \eqref{eq:fin} holds}
\STATE{$\mJ := \mJ\cup\{i+1\}$}
\STATE{Define $B_{i+1}>B_i$ to be the unique integer satisfying \eqref{eq:Bdef}}
\STATE{$i:= i+1$}
\ENDWHILE
\STATE{Define $(I_i)_{i\in \mJ}\subset \mI$ and $(H_i)_{i\in \mJ}\subset [0,a)$ as in the proof of Theorem \ref{thm:cube}}
\FOR{$i\in \mJ$}
\STATE{$t_{j,n}:= H_i$, for $j\in I_i$}
\STATE{Compute $(\tilde t_j)_{j\in I_i}\subset \R^{n-1}$ by calling {\sc ComputeTranslations} with inputs $n-1$, $(\ell_j)_{j\in I_i}$, and $a$, noting that $a$ satisfies the requirement of the algorithm by the equivalence \eqref{eq:equi} and \eqref{eq:Bdef} (which implies \eqref{eq:Bdef2} if $\mJ$ is finite)}
\ENDFOR
\FOR{$j\in \mI$}
\STATE{$t_j := (\tilde t_j, t_{j,n})$}
\ENDFOR
\ENDIF
\end{algorithmic}
\end{algorithm}

\begin{remark}[Algorithm \ref{alg:trans} and the translations $(t_j)_{j\in \mI}$ in Theorem \ref{thm:cube}] \label{rem:alg}
Using the notations of the proof of Theorem \ref{thm:cube} it is clear that the proof of that theorem provides in \eqref{eq:tjn} a specification for the last component $t_{j,n}\in \R$ of $t_j$, for each $j\in \mI$.  That appropriate choices exist for the remaining components $\tilde t_j\in \R^{n-1}$ follows from the inductive hypothesis applied to each of the sequences $(\ell_j)_{j\in I_i}$, for $i\in \mJ$. This recursive construction of the translations $(t_j)_{j\in \mI}$ is captured in Algorithm \ref{alg:trans}. This algorithm specifies $(t_j)_{j\in \mI}$ for every index set $\mI$, finite or infinite, and  can be converted into code that runs in finite time in the case that $\mI$ is finite (Figure \ref{fig:Gamma} was produced with such a code). We leave to the reader a proof by induction of the correctness of this algorithm, which proof mirrors, step by step, that of Theorem \ref{thm:cube}.
\end{remark}

\begin{remark}[The number of layers produced by Algorithm \ref{alg:trans}] \label{rem:layers}
The construction in the proof of Theorem \ref{thm:cube}, captured in Algorithm \ref{alg:trans}, arranges the cubes $(\mO_j)_{j\in\mI}$ in layers, indexed by the set $\mJ$. It is easy to see from the construction of $\mJ$ (see the discussion around equations  \eqref{eq:fin}-\eqref{eq:Bdef2}), that $\mJ$ is finite if and only if $V_{n-1} :=\sum_{j\in \mI} \ell_j^{n-1} < \infty$, in which case the number of layers is $M$, the cardinality of $\mJ$, with infinitely many cubes in the $M$th layer if $\mI$ is not finite. In particular, if the inputs to Algorithm \ref{alg:trans} are such that $V_{n-1}<\infty$ and $a>\ell_{N_1} + (V_{n-1}-\ell^{n-1}_{N_1})^{1/(n-1)}$, then $M=1$, so that all the cubes are placed in a single layer. As an example of this last possibility, if $n=2$ and the sequence $(k_j)_{j\in \N}$ is as in Figure \ref{fig:GammaV}, in which case, by the formula in Theorem \ref{thm1}, $\ell_j= \pi\sqrt{2}/k_j=(\pi\sqrt{2}/4)(j+4)^{-6/5}$, $j\in \N$, then Algorithm \ref{alg:trans} arranges the cubes in the single layer shown in Figure \ref{fig:GammaV} if the input $a$ is sufficiently large and $(d_i)_{i\in \N}$, in line 3 of the algorithm,  is chosen as in the caption of Figure \ref{fig:GammaV}.
\end{remark}

\begin{proof}[Proof of Theorem \ref{thm2}]
Let $(\varepsilon_j)_{j\in \N}$, $(\ell_j)_{j\in \N}$, and $a$ be chosen as in the statement of the theorem.  With $(\mO_j)_{j\in \N}$ and $(\Gamma_j)_{j\in \N}$ as defined in the theorem we need to choose a sequence of translations  $(t_j)_{j\in \N}\subset \R^n$, i.e., an arrangement of the cubes $(\mO_j)_{j\in \N}$, so that (i)-(iii) in Theorem \ref{thm1} are satisfied. Let $(t_j)_{j\in \N}$ be the sequence whose existence is guaranteed by the construction in Theorem \ref{thm:cube}, i.e., the sequence defined (see Remark \ref{rem:alg})  by a call to the algorithm {\sc ComputeTranslations} (Algorithm \ref{alg:trans}), with inputs $n$,   $(\ell_j)_{j\in \N}$, and $a$, so that $(t_{j,n})_{j\in \mI}$ is given by \eqref{eq:tjn}, with $\mI=\N$, and $(d_i)_{i\in \N}$, $\mJ$, $(H_i)_{i\in \mJ}$, and $(B_i)_{i\in \mJ}$ defined as in the proof of Theorem \ref{thm:cube}. Then, by Theorem \ref{thm:cube},  items (i) and (ii) in Theorem \ref{thm1} are satisfied, in particular $\Gamma \subset \partial \mO_\infty \subset a\overline{\mO}$.

Let us check that (iii)  in Theorem \ref{thm1} also holds, i.e.\ that $\Omega:=\R^n\setminus \Gamma$ is connected, where $\Gamma := \overline{\bigcup_{j=1}^\infty \Gamma_j}$. The construction in Theorem \ref{thm:cube} (or see Algorithm \ref{alg:trans}) is such that the cubes are arranged in layers, indexed by $\mJ$. In the case that $\mJ$ is infinite, let $H_\infty := \lim_{i\to\infty} H_i$, and note that $H_\infty\leq a$ and $x=(x_1,\ldots,x_n)\in \Omega$ if $x_n>H_\infty$. In this case it is easy to see that, for each $i\in \mJ$, $\{x\in \Omega:x_n < H_i\}$, which contains only finitely many of the $\mO_j$, is connected, so that $\{x\in \Omega:x_n < H_\infty\}$ is connected. And this set overlaps with the connected set $\Omega_E:= \{x\in \R^n: |x|>\max_{y\in \overline{\mO_\infty}}|y|\}\subset \Omega$, which in turn overlaps with the connected set $\{x\in \Omega:x_n \geq H_\infty\} \supset \{x\in \R^n:x_n>H_\infty\}$. Thus $\Omega$ is connected.

In the case that $\mJ=\{1,\ldots,M\}$ is finite and $M>1$  the $i$th layer contains finitely many $\mO_j$, if $i<M$, and the above argument gives that $\Omega':=\{x\in \Omega:x_n < H_M\} \cup \Omega_E \cup \{x\in \Omega:x_n \geq H_M+\ell_{B_M}\} \subset \Omega$ is connected, and so path-connected as $\Omega'$ is open. Further, if $x=(\tilde x,x_n)\in \Omega\setminus \Omega'$ then $H_M\leq  x_n <H_M+\ell_{B_M}$. If also $x\not\in \mO_\infty$ then the vertical line $\{(\tilde x,s): s\geq x_n\}\subset \Omega$ connects $x$ to $\Omega'$. On the other hand, if $x\in \mO_\infty\setminus \Omega'\subset \Omega$, in which case $x\in \mO_j$ for some $j\in I_M$, picking some $y\in \partial \mO_j\setminus \Gamma_j$ and some $z\in \R^n$ with $H_M-d_{M-1}<z_n<H_M$, it holds that $z\in \Omega'$ and that the polygonal path $[x,y]\cup [y,z]\subset \Omega$. Thus $\Omega$ is path-connected and so connected. The same holds by a similar but simpler argument if $M=1$.
\end{proof}

\begin{proof}[Proof of Theorem \ref{thm2v}]
Let $(\varepsilon_j)_{j\in \N}$, $(\ell_j)_{j\in \N}$, $c,\eta>0$, $M\in \N$  be as in the statement of the theorem, and $(d_j)_{j\in \N}$ be a decreasing, summable sequence of positive numbers.
For $s\in \R$, let $\lfloor s\rfloor$ denote the largest integer $\leq s$.

Working with coordinates $(x_1,\ldots,x_n)$, our methodology will be to arrange the cubes in an infinite sequence of layers parallel to the hyperplane $x_n=0$. To achieve this we choose, for each $j\in \N$, the translation $t_j=(t_{j,1},\ldots,t_{j,n})\in \R^n$ in a way we now specify. This methodology  overlaps with that of the proof of Theorem \ref{thm2}, and we will use, as far as possible, the same notations.

For $i\in \N$ let
\begin{eqnarray*}
I_i &:=& \{j\in \N: \mO_j \mbox{ is in the $i$th layer}\}, \quad B_i := \min I_i,\\ 
L_i &:=& \max\{\ell_j:j\in I_i\} = \ell_{B_i}, \quad \mbox{and} \quad N_i := \# I_i,
\end{eqnarray*}
the cardinality of $I_i$. 
We arrange the cubes so that the first cube $O_1$ is on the first level (so that $B_1=1$) 
and so that, if $O_j$ is on level $i$, for some $i,j\in \N$, then $O_{j+1}$ is either on the same level $i$ or is on level $i+1$. 
Thus we place $N_i$ cubes on the $i$th level,
\begin{equation} \label{eq:BN}
B_1 = 1 \quad \mbox{and} \quad B_{i+1} = B_i + N_i, \quad i\in \N.
\end{equation}
As in the proof of Theorem \ref{thm2} (though the index set $\mJ$ there may be finite) we set (cf.~\eqref{eq:tjn})
\begin{equation} \label{eq:tjn2}
t_{j,n} := H_i, \qquad \mbox{for } j\in I_i, \quad i\in \N,
\end{equation}
where
$$
H_1 := 0, \qquad H_{i+1} := H_i +\ell_{B_{i}}+d_{i}, \qquad i\in \N,
$$
so that
$$
H_i = \sum_{j=1}^{i-1}(\ell_{B_{j}}+d_j), \qquad i\in \N,
$$

In contrast to the proof of Theorem \ref{thm2} where the numbers of cubes that we place on each layer (and how many layers there are) depends on $(\ell_j)_{j\in \N}$, here we place $\mO_1$ on the first level by itself, so that $I_1=\{1\}$ and $N_1=B_1=1$. Then, separated from $\mO_1$ by the distance $d_1$ in the $x_n$-direction, we place the next $N_2=\lfloor 2\prod_{m=1}^{M-1}\log_m(2+\re_m)\rfloor^{n-1}$ cubes on the second level, so that $B_2=2$.
In general, we place $N_i = \lfloor i p_i\rfloor^{n-1}$ cubes on the $i$th level\footnote{A numerical calculation, using that $1\leq \log_m(x+\re_m)\leq 1+x/(\re_1\re_2\cdots\re_m)$, for $x\geq 0$ and $m\in \N$, yields that this is consistent with our specification above that $N_1=1$.}, where 
$$
p_i:= \prod_{m=1}^{M-1}\log_m(i+\re_m), \qquad i\in \N,
$$
so that
$$
B_i = 1 + \sum_{j=1}^{i-1} \lfloor j p_j\rfloor^{n-1} \quad \mbox{and} \quad 
I_i = \{B_i,\ldots,B_i+\lfloor i p_i\rfloor^{n-1}-1\}, \qquad i\in \N.
$$ 
Note that $N_i=i^{n-1}$, for $i\in \N$, if $M=1$, and that, for $i\geq 2$,
$$
B_i \geq \sum_{j=\lfloor i/2\rfloor}^{i-1} \lfloor j p_j\rfloor^{n-1} \geq \frac{i}{2}  \big\lfloor \lfloor i/2\rfloor p_{\lfloor i/2\rfloor}\big\rfloor^{n-1}
\sim \frac{i^n}{2^n} \prod_{m=1}^{M-1}\log^{n-1}_m(i)
$$
as  $i\to\infty$. Using \eqref{eq:ellbv}, this implies that for some $c^*>0$, where $\eta>0$ is as in \eqref{eq:ellbv},
\begin{equation} \label{eq:ass}
\ell_{B_i} \leq \frac{c^*}{i\log_{M}^{1+\eta}(i)\prod_{m=1}^{M-1}\log_m(i)}, \qquad \mbox{for all sufficiently large $i$}.
\end{equation}
Thus
$$
H_\infty := \lim_{i\to\infty} H_i = \sum_{j=1}^{\infty}\ell_{B_j} + \sum_{j=1}^\infty d_j
$$
is finite; the second sum converges by assumption, and the first by \eqref{eq:ass} and an application of the integral test. 

The above partial construction has a number of important features. First, the set $\mO_\infty=\bigcup_{j=1}^\infty\mO_j$ is limited in the $x_n$ direction; if $x\in \mO_\infty$, then $0 < x_n \leq H_\infty$. Second, if $\mO_j$ and $\mO_{j'}$ are on different levels, their closures do not overlap; indeed, if $j\in I_i$ and $j'\in I_{i'}$, with $1\leq i<i'$, then $\dist(\mO_j,\mO_{j'})\geq d_{i}$. Third, as long as we arrange the cubes in level $i$ so that their closures do not overlap and so that $\mO_\infty$ is bounded,  i.e. so that (i) and (ii)  in Theorem \ref{thm1} hold, then $\Omega:=\R^n\setminus \Gamma$ is connected, where $\Gamma := \overline{\bigcup_{j=1}^\infty \Gamma_j}$, i.e., (iii) in Theorem \ref{thm1} holds. Indeed, arguing as in the proof of Theorem \ref{thm2}, it is easy to see in the case that (i) and (ii) hold that, for each $i\in \N$, $\{x\in \Omega:x_n<H_i\}$, which contains only finitely many of the $\mO_j$, is connected, so that $\{x\in \Omega:x_n<H_\infty\}$ is connected. And this set overlaps with the connected set $\{x\in \R^n: |x|>\max_{y\in \overline{\mO_\infty}}|y|\}\subset \Omega$, which in turn overlaps with the connected set $\{x\in \Omega:x_n\geq H_\infty\} \supset \{x\in \R^n:x_n>H_\infty\}$. 

It remains to describe how to arrange the $N_i$ cubes $\{\mO_j:j\in I_i\}$ on level $i$, for each $i\in \N$, so that (i) and (ii) in Theorem \ref{thm1} hold. One construction is as follows. 
Recall that $L_i= \max\{\ell_j:j\in I_i\} = \ell_{B_i}$ and set $h_i := L_i + d_i/p_i$. Choose a bijection $f_i:I_i\to \{0,\ldots,\lfloor i p_i\rfloor-1\}^{n-1}$ 
and, for $j\in I_i$, set $t_j :=(\widetilde t_j, t_{j,n})$, where $t_{j,n}$ is given by \eqref{eq:tjn2} and $\widetilde t_j\in \R^{n-1}$ by $\widetilde t_j := h_i f_i(j)$. This places each cube $\mO_j$, $j\in I_i$, which has sidelength $\leq L_i$, in a larger cube of sidelength $h_i=L_i+d_i/p_i$, ensuring that $\dist(\mO_j,\mO_{j'})\geq d_i/p_i$, if $j,j'\in I_i$ with $j\neq j'$ and $i\in \N$, so that (i) in Theorem \ref{thm1} holds.
It remains to check that also (ii) in Theorem \ref{thm1} holds. Clearly, $\mO_j\subset [0,\lfloor i p_i\rfloor h_i]^{n-1}\times [0,H_\infty]$, for $j\in I_i$, $i\in \N$, so $\mO_\infty$ is bounded as long as 
$$
\sup_{i\in \N} \lfloor i p_i\rfloor h_i= \sup_{i\in \N}\lfloor i  p_i\rfloor (\ell_{B_i}+d_i/p_i)<\infty.
$$ 
Now $\lfloor i p_i\rfloor d_i/p_i \leq id_i\to 0$, as $i\to\infty$, since  $(d_i)_{i\in \N}$ is summable and decreasing (see, e.g., \cite[\S179]{Hardy}). Similarly, $\lfloor i p_i\rfloor \ell_{B_i}\to 0$, as $i\to\infty$, by \eqref{eq:ass}. Thus $\lfloor i p_i\rfloor h_i\to 0$ as $i\to\infty$, so the supremum is finite and $\mO_\infty$ is bounded. Since $\lfloor i p_i\rfloor h_i\to 0$, as $i\to\infty$, it also follows that $\Gamma\cap \{x\in \R^n:x_n=H_\infty\} = \{H_\infty e_n\}$, where $e_n$ is the unit vector in the $x_n$-direction, so that
\begin{equation} \label{eq:onep2}
\Gamma = \{H_\infty e_n\}\cup \bigcup_{j=1}^\infty \Gamma_j. 
\end{equation}
\end{proof}

\appendix
\section{Properties of Radiating Solutions}

We show in this appendix, via spherical harmonic expansions, the properties of radiating solutions of the Helmholtz equation that are captured in Lemma \ref{lem:2.1} below. This lemma was previously proved, in dimensions $n=2,3$, using similar methods of argument, in \cite[Lemma 2.1]{CWMonk2008} (and see \cite[Theorem 2.6.4]{Ned} for \eqref{eq:21one}).

The first of  \eqref{eq:21one} can be proved alternatively (in any dimension) by a straightforward integration by parts argument. (An application of Green's theorem shows that $\mathrm{Im}\, \int_{\Gamma_R} \bar u \partial_r u \, \rd s$  is independent of $R$. Then taking the limit $R\to\infty$, using the radiation condition, proves that it is non-negative.) By contrast, there appears to be no integration by parts argument to establish the second of \eqref{eq:21one}. 
The first of \eqref{eq:21one} expresses that the energy flow is outgoing, and plays a key role in energy arguments via Green's theorems. The second estimate in \eqref{eq:21one} has played an important role (in dimensions $n=2,3$) in establishing {\em a priori} estimates and positivity of sesquilinear forms (see, e.g., \cite[Lemma 3.3]{chandler2020high}). We use it in this way (for general $n\geq 2$) in our proof of \eqref{eq:coer} above.
 
 The inequality \eqref{eq:21two} was established (for $n=2,3$) in  \cite{CWMonk2008} as a component in the proof (for $n=2,3$) of the cut-off resolvent estimate Theorem \ref{thm:ss}, and has been an important component in the proofs of other similar estimates, e.g.,  \cite[Theorem 3.2]{SpKSm15} and \cite[Theorem 1.10]{chandler2020high}. The right condition on $\alpha$ for \eqref{eq:21two} to hold in dimensions $n>3$ is {\em a priori} not obvious; the condition $\alpha \geq \max(1,n-2)$ in Lemma \ref{lem:2.1} reduces to the simpler condition $\alpha \geq 1$ of \cite[Lemma 2.1]{CWMonk2008} in dimensions $n=2,3$. 
 A slightly weaker version of \eqref{eq:21two}, with the condition $\alpha \geq \max(1,n-2)$ replaced by $\alpha \geq n-1$, is proved, in the case $n=2,3$, by an integration by parts argument using a so-called {\em Morawetz multiplier}, as \cite[Lemma 2]{SpKSm15} or \cite[Lemma 2.4]{chandler2020high}. The integration by parts proofs given in \cite{SpKSm15} and \cite{chandler2020high} for $\alpha = n-1$ (the result holds for $\alpha>n-1$ if it holds for $\alpha=n-1$, by \eqref{eq:21one}), appear to generalise to dimensions $n>3$. This is of note as it is the case $\alpha = n-1$ that appears to be particularly important for applications of \eqref{eq:21two}. For example, the case $\alpha=n-1$ is enough for the proof of the cut-off resolvent estimates that are \cite[Lemma 3.5]{CWMonk2008} and \cite[Theorem 1.10]{chandler2020high} and enough for the proof of wavenumber-independent coercivity in \cite[Theorem 3.2, \S3.2]{SpKSm15}.
 
In this lemma we use the notations
$G_R:= \{x\in \R^n:|x|>R\}$ and $\Gamma_R:= \{x\in \R^n:|x|=R\}$, introduced in \S\ref{sec:weak},
$\nabla_T$ denotes the surface gradient on $\Gamma_R$ and $\nabla_S$ and $\Delta_S$ denote the surface gradient and Laplace-Beltrami operator, respectively, on the unit sphere $\mathbb{S}^{n-1}=\Gamma_1$.
\begin{lemma} \label{lem:2.1}
Suppose that $k>0$, $R>R_0>0$, and that $u\in C^2(G_{R_0})$ satisfies  $\Delta u + k^2u=0$ in $G_{R_0}$ and the Sommerfeld radiation condition \eqref{eq:src}. Then
\begin{equation} \label{eq:21one}
\mathrm{Im}\, \int_{\Gamma_R} \bar u \frac{\partial u}{\partial r} \, \rd s \geq 0, \qquad \mathrm{Re}\, \int_{\Gamma_R} \bar u \frac{\partial u}{\partial r} \, \rd s \leq 0,
\end{equation}
and, for $\alpha \geq \max(1,n-2)$,
\begin{equation} \label{eq:21two}
 \alpha\, \mathrm{Re}\, \int_{\Gamma_R} \bar u \frac{\partial u}{\partial r} \, \rd s + R\int_{\Gamma_R}\left(k^2|u|^2 +  \left|\frac{\partial u}{\partial r}\right|^2 - |\nabla_T u|^2\right) \, \rd s \leq 2kR \,\mathrm{Im}\, \int_{\Gamma_R} \bar u \frac{\partial u}{\partial r} \, \rd s.
\end{equation}
\end{lemma}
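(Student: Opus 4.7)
My plan is to pass to a spherical-harmonic expansion of $u$ on the spheres $\Gamma_r$, thereby reducing each surface integral in \eqref{eq:21one}--\eqref{eq:21two} to a Parseval-type sum over modes, and then to verify the corresponding single-mode inequalities using classical identities for Hankel functions. Writing $u(r\hat x) = \sum_{\ell,m} u_\ell^m(r) Y_\ell^m(\hat x)$ in an $L^2(\mathbb{S}^{n-1})$-orthonormal basis diagonalising $-\Delta_S$ (with eigenvalues $\lambda_\ell = \ell(\ell+n-2)$), the Helmholtz equation together with the Sommerfeld condition forces $u_\ell^m(r) = c_\ell^m\, r^{(2-n)/2}\, H_\nu^{(1)}(kr)$ with $\nu := \ell + (n-2)/2$. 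Parseval on the sphere (using $\int_{\mathbb{S}^{n-1}}|\nabla_S Y_\ell^m|^2\,\rd\sigma = \lambda_\ell$ for the tangential-gradient term) turns each of the three surface integrals into a sum over $(\ell,m)$ with non-negative weights $|c_\ell^m|^2$, so it suffices to check the lemma mode-by-mode. With $\rho := kR$, $J := H_\nu^{(1)}(\rho)$, $J' := H_\nu^{(1)'}(\rho)$ and $\beta := \alpha - (n-2)$, the required modal inequalities are
\begin{align*}
\mathrm{Im}[\bar J J']  \geq 0, \qquad  \tfrac{2-n}{2\rho}|J|^2 + \mathrm{Re}[\bar J J']  \leq 0, \\
(\rho^2 - \nu^2 - \tfrac{\beta(n-2)}{2})|J|^2 + \beta\rho\,\mathrm{Re}[\bar J J'] + \rho^2|J'|^2  \leq \tfrac{4\rho}{\pi}.
\end{align*}

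The first is the Wronskian identity $\mathrm{Im}[\overline{H_\nu^{(1)}(\rho)}H_\nu^{(1)'}(\rho)] = 2/(\pi\rho)$. The second I would derive from Nicholson's integral representation
$$
|H_\nu^{(1)}(\rho)|^2 = \frac{8}{\pi^2}\int_0^\infty \cosh(2\nu t)\,K_0(2\rho\sinh t)\,\rd t,\qquad \nu\geq 0,
$$
which, since $K_0$ is positive and decreasing, immediately shows that $\rho\mapsto |H_\nu^{(1)}(\rho)|^2$ is decreasing, so $\mathrm{Re}[\bar J J'] = \tfrac12(|J|^2)' \leq 0$, and, via $\cosh(2\nu t) \geq \cosh(t)$ and $|H_{1/2}^{(1)}(\rho)|^2 = 2/(\pi\rho)$, also yields $\rho|H_\nu^{(1)}(\rho)|^2 \geq 2/\pi$ whenever $\nu \geq 1/2$. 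For the third inequality, since $\mathrm{Re}[\bar J J'] \leq 0$ and $n \geq 2$, increasing $\beta$ only tightens the bound, so it suffices to check the minimal admissible value: $\beta = 0$ (when $n \geq 3$, which forces $\nu \geq 1/2$) and $\beta = 1$ (when $n = 2$, so $\nu \in \{0,1,2,\ldots\}$). Set $P(\rho) := \rho^2|J'|^2 + (\rho^2 - \nu^2)|J|^2$; a short computation using Bessel's equation $\rho J'' + J' + (\rho - \nu^2/\rho)J = 0$ gives the clean identity $P'(\rho) = 2\rho|J|^2$. For $\beta = 0$, $\nu \geq 1/2$, the bound $\rho|J|^2 \geq 2/\pi$ yields $(P - 4\rho/\pi)' \geq 0$, and the standard asymptotic $|H_\nu^{(1)}(\rho)|^2 \sim 2/(\pi\rho)$ shows $P - 4\rho/\pi \to 0$ at infinity, so $P \leq 4\rho/\pi$. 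For $n = 2$, $\nu \geq 1$, this same bound on $P$ together with $\rho\,\mathrm{Re}[\bar J J'] \leq 0$ disposes of the $\beta=1$ case.

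The remaining case $\beta = 1$, $n = 2$, $\nu = 0$ is the main obstacle: here Nicholson's formula gives only $\rho|H_0^{(1)}(\rho)|^2 \leq 2/\pi$, so one may in fact have $P \geq 4\rho/\pi$, and the correction $\rho\,\mathrm{Re}[\bar J J']$ is essential. I would treat this case by applying the same monotonicity strategy to the full quantity $T_1(\rho) := P(\rho) + \rho\,\mathrm{Re}[\bar J J']$. Using $H_0^{(1)'} = -H_1^{(1)}$ a direct differentiation yields $T_1'(\rho) = \rho\bigl(|H_0^{(1)}|^2 + |H_1^{(1)}|^2\bigr)$, and applying the recursion $H_1^{(1)'} = H_0^{(1)} - H_1^{(1)}/\rho$ once more gives the key identity
$$
\bigl(\rho(|H_0^{(1)}|^2 + |H_1^{(1)}|^2)\bigr)' = |H_0^{(1)}|^2 - |H_1^{(1)}|^2 \leq 0,
$$
the last inequality being monotonicity of $|H_\nu^{(1)}|^2$ in $\nu$ (again from Nicholson, using that $\cosh(2\nu t)$ is increasing in $\nu$). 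Together with the large-$\rho$ limit $\rho(|H_0^{(1)}|^2 + |H_1^{(1)}|^2) \to 4/\pi$, this gives $T_1'(\rho) \geq 4/\pi$, so $T_1 - 4\rho/\pi$ is non-decreasing with limit $0$ at infinity and hence non-positive, which is the required modal estimate and closes the final case.
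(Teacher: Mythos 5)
Your modal reduction is the same as the paper's (expand on spherical harmonics, reduce each surface integral to a sum of Hankel-function expressions, check mode by mode), and the modal inequalities you write down are exactly the paper's: your third inequality is $\rho^{n-2}B_m(\rho)\leq 0$ in the paper's notation, with the paper's quantity $A_\nu(\rho)$ equal to your $P(\rho)-4\rho/\pi$ and the $n=2$, $\nu=0$ supplement equal to your $T_1(\rho)-4\rho/\pi\leq 0$. Where you genuinely diverge is in how the modal inequalities are established. The paper cites \cite[Eq.~(2.7), pp.~1436--1437]{CWMonk2008} for both $A_\nu\leq 0$ ($\nu\geq 1/2$) and for the $n=2$, $m=0$ case, whereas you supply self-contained proofs built on the clean ODE identity $P'(\rho)=2\rho|J|^2$ (which I checked; it follows from Bessel's equation exactly as you say), Nicholson's formula (for monotonicity of $M_\nu^2$ in $\rho$ and in $\nu$, and for the lower bound $\rho M_\nu^2\geq 2/\pi$ when $\nu\geq 1/2$), and large-$\rho$ asymptotics. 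This is a more elementary and more transparent route to the modal estimates, and it makes the appendix independent of the unspecified argument in \cite{CWMonk2008}. Your treatment of $n=2$, $\nu=0$ via $T_1'=\rho(|H_0^{(1)}|^2+|H_1^{(1)}|^2)=:Q$, $Q'=|H_0^{(1)}|^2-|H_1^{(1)}|^2\leq 0$, and $Q\to 4/\pi$ is also correct (I verified both identities via the recurrences for $H_0^{(1)}$, $H_1^{(1)}$).

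Two points are slightly under-argued and worth tightening. First, in both the $\beta=0$ case and the $T_1$ case, the conclusion needs $P(\rho)-4\rho/\pi\to 0$ (resp.\ $T_1(\rho)-4\rho/\pi\to 0$); the leading asymptotic $|H_\nu^{(1)}(\rho)|^2\sim 2/(\pi\rho)$ by itself only gives $P\sim 4\rho/\pi$, which does not rule out an $O(1)$ offset. You need the next order in the envelope expansions (e.g.\ NIST 10.18.17 and 10.18.21, $M_\nu^2(\rho)=\frac{2}{\pi\rho}(1+O(\rho^{-2}))$, $N_\nu^2(\rho)=\frac{2}{\pi\rho}(1+O(\rho^{-2}))$), which then give $P-4\rho/\pi=O(\rho^{-1})\to 0$; state this. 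Second, the density/convergence step (justifying mode-by-mode reduction of the surface integrals) is stated rather casually as ``Parseval''; since $u$ is a classical radiating solution, elliptic regularity makes $u$ smooth on $\Gamma_R$, so the spherical-harmonic expansion converges strongly enough, but the paper is careful to do this via a $\mathrm{DtN}$ reformulation and density of finite spherical-harmonic sums in $H^1(\Gamma_R)$, and you should at least flag the regularity you are using. Neither of these is a real gap, just missing detail.
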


We shall prove this lemma via a reformulation in terms of Dirichlet to Neumann maps. For $R>0$ consider the exterior Dirichlet problem in $G_R$ of finding, given $\psi\in H^{1/2}(\Gamma)$, a $u\in \Hol(G_R)$ that satisfies $\Delta u + k^2u=0$ in $G_R$, the radiation condition \eqref{eq:src}, and the boundary condition $\gamma^+_R u=\psi$, where $\gamma^+_R:\Hol(G_R)\to H^{1/2}(\Gamma_R)$ is the trace operator. This problem is well-posed (e.g., \cite[Theorem 9.10]{mclean2000strongly}), so that there is a well-defined map $\DTN:H^{1/2}(\Gamma_R)\to H^{-1/2}(\Gamma_R)$ that takes the Dirichlet data $\psi$ to  $\partial_r u$, the normal derivative of the solution $u$  on $\Gamma_R$. This map $\DTN:H^{1/2}(\Gamma_R)\to H^{-1/2}(\Gamma_R)$ is continuous, indeed is also continuous as a mapping $\mathrm{DtN}_R:H^1(\Gamma_R)\to L^2(\Gamma_R)$ (see, e.g., the discussion in \cite[\S2]{spence2014wavenumber}). 

Suppose that $u$ satisfies the conditions of the above lemma. Then, for $R>R_0$, $v:= u|_{G_R}$ satisfies the above Dirichlet problem in $G_R$ with boundary data $\phi:= u|_{\Gamma_R}$, so that \eqref{eq:21one}  can be written  as
\begin{equation} \label{eq:21a}
\mathrm{Im}\, \langle \DTN \phi,\phi\rangle \geq 0, \qquad \mathrm{Re}\, \langle \DTN \phi,\phi\rangle\leq 0,
\end{equation}
where $\langle\cdot,\cdot\rangle$ denotes the duality pairing on $H^{-1/2}(\Gamma_R)\times H^{1/2}(\Gamma_R)$ that is an extension of  the inner product $(\cdot,\cdot)_{L^2(\Gamma_R)}$ on $L^2(\Gamma_R)$, 
and \eqref{eq:21two} can be written as
\begin{eqnarray} \nonumber
 \alpha\, \mathrm{Re}\, \langle \DTN \phi,\phi\rangle + R\left(k^2\|\phi\|_{L^2(\Gamma_R)}^2 +  \|\DTN\phi\|_{L^2(\Gamma_R)}^2 - \|\nabla_T \phi\|_{L^2(\Gamma_R)}^2\right)\\ \label{eq:21b}
 \hspace{10ex}  \leq 2kR \,\mathrm{Im}\, \langle \DTN \phi,\phi\rangle.
\end{eqnarray}
Thus Lemma \ref{lem:2.1} is implied by the following rephrased result.

\begin{lemma} \label{lem:2.1b}
Suppose that $R>0$ and $k>0$. Then \eqref{eq:21a} holds for all $\phi\in H^{1/2}(\Gamma_R)$ and, if $\alpha \geq \max(1,n-2)$, then also   \eqref{eq:21b} holds for all $\phi\in H^{1}(\Gamma_R)$.
\end{lemma}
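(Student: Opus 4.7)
The plan is to diagonalise $\DTN$ by expanding Dirichlet data on $\Gamma_R\cong\mathbb{S}^{n-1}$ in spherical harmonics, reducing Lemma~\ref{lem:2.1b} to scalar inequalities in each Fourier mode, and then verifying those via identities for Hankel functions. Fix an $L^2(\mathbb{S}^{n-1})$-orthonormal basis $\{Y_{\ell,m}\}$ of spherical harmonics with $-\Delta_S Y_{\ell,m}=\ell(\ell+n-2)Y_{\ell,m}$. Separation of variables, combined with uniqueness of the outgoing exterior Dirichlet solution in $G_R$ (which selects the Hankel function $H_{\nu_\ell}^{(1)}$ in each mode), shows that if $\phi(R\hat x)=\sum a_{\ell,m}Y_{\ell,m}(\hat x)$ then $\DTN\phi=\sum \lambda_\ell a_{\ell,m}Y_{\ell,m}$ with
\[
\lambda_\ell=-\frac{n-2}{2R}+k\,\frac{(H_{\nu_\ell}^{(1)})'(kR)}{H_{\nu_\ell}^{(1)}(kR)},\qquad \nu_\ell:=\ell+\tfrac{n-2}{2}.
\]
Combined with Parseval on $\Gamma_R$ and $\|\nabla_T\phi\|_{L^2(\Gamma_R)}^2=R^{n-3}\sum_{\ell,m}\ell(\ell+n-2)|a_{\ell,m}|^2$, this reduces \eqref{eq:21a} and \eqref{eq:21b} to pointwise scalar inequalities on each $\lambda_\ell$.

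For \eqref{eq:21a}, I would first use the Hankel Wronskian $H_\nu^{(1)}(z)(H_\nu^{(2)})'(z)-(H_\nu^{(1)})'(z)H_\nu^{(2)}(z)=-4i/(\pi z)$, together with $H_\nu^{(2)}=\overline{H_\nu^{(1)}}$ for real $\nu\ge 0$, $z>0$, to obtain $\mathrm{Im}\,\lambda_\ell = 2/(\pi R\,|H_{\nu_\ell}^{(1)}(kR)|^2)>0$. For the real part, note that $\mathrm{Re}\,\lambda_\ell$ is the logarithmic derivative at $R$ of $f_\ell(r):=r^{-(n-2)/2}|H_{\nu_\ell}^{(1)}(kr)|$; the factor $r^{-(n-2)/2}$ is non-increasing for $n\ge 2$, and $|H_\nu^{(1)}(z)|^2$ is strictly decreasing in $z>0$ by Nicholson's representation
\[
|H_\nu^{(1)}(z)|^2=\frac{8}{\pi^2}\int_0^\infty K_0(2z\sinh t)\cosh(2\nu t)\,dt,
\]
since $K_0'=-K_1<0$. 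Hence $f_\ell$ is non-increasing and $\mathrm{Re}\,\lambda_\ell\le 0$.

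For \eqref{eq:21b}, after substituting $\lambda_\ell$ and using $\nu_\ell^2-(n-2)^2/4=\ell(\ell+n-2)$, the mode-wise inequality rearranges, with $z:=kR$ and $g_\ell(z):=z(H_{\nu_\ell}^{(1)})'(z)/H_{\nu_\ell}^{(1)}(z)$, into the disk bound
\[
\bigl(\mathrm{Re}\,g_\ell(z)+\tfrac{\alpha-(n-2)}{2}\bigr)^2+\bigl(\mathrm{Im}\,g_\ell(z)-z\bigr)^2\le \ell(\ell+n-2)+\tfrac{\alpha^2}{4}.
\]
For $\alpha\ge\max(1,n-2)$ we have $\alpha-(n-2)\ge 0$, and the real-part bound $\mathrm{Re}\,g_\ell(z)=R\,\mathrm{Re}\,\lambda_\ell+(n-2)/2\le(n-2)/2$ from \eqref{eq:21a} gives sign information on the first square; the heart of the matter is the critical case $\alpha=\max(1,n-2)$. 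I would derive this from the Riccati equation $zg_\ell'+g_\ell^2=\nu_\ell^2-z^2$ (equivalent to Bessel's equation for $H_{\nu_\ell}^{(1)}$), together with the Wronskian identity $\mathrm{Im}\,g_\ell(z)=2/(\pi|H_{\nu_\ell}^{(1)}(z)|^2)$ and the asymptotics $g_\ell(z)\to-\nu_\ell$ as $z\to 0^+$ and $g_\ell(z)=iz-\tfrac12+O(z^{-1})$ as $z\to\infty$. Denoting the slack in the disk bound by $F_\ell(z)$, the Riccati equation yields an ODE of the form $F_\ell'(z)=-\tfrac{2}{z}\mathrm{Re}\,g_\ell(z)\,F_\ell(z)+(\text{source})$; one then checks that the source has a definite sign (using $|H_\nu^{(1)}(z)|^2\ge 2/(\pi z)$ for $\nu\ge 1/2$) and concludes $F_\ell\ge 0$ on $(0,\infty)$ by a barrier argument at hypothetical interior zeros, using $F_\ell(0^+)\ge 0$ and $F_\ell(\infty)>0$.

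The hard part will be this final mode-wise disk bound, where the sharp threshold $\alpha=\max(1,n-2)$ in \eqref{eq:21b} is encoded: as the authors note, no Morawetz multiplier integration by parts reaches it (those give only $\alpha\ge n-1$), and the argument must be sensitive to the distinction between $\nu\ge 1/2$, where $|H_\nu^{(1)}(z)|^2\ge 2/(\pi z)$ holds, and $0\le\nu<1/2$, which occurs only in dimension $n=2$ with $\ell=0$. That last boundary case will likely require a dedicated computation that exploits the explicit slack $(n-2)/2-\mathrm{Re}\,g_0\ge 0$ on the real-part side, balancing the vanishing of the nominal disk radius $\nu_\ell^2$ when $\ell=0$ and $n=2$.
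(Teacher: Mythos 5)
Your overall strategy is the same as the paper's: diagonalise $\mathrm{DtN}_R$ over spherical harmonics, use the Wronskian of $H_\nu^{(1)}$ and $H_\nu^{(2)}=\overline{H_\nu^{(1)}}$ for the imaginary part, the monotonicity of $|H_\nu^{(1)}|$ (Nicholson/Watson) for the real part, and reduce \eqref{eq:21b} to a scalar inequality in each mode. Your derivation of the mode-wise ``disk bound'' is correct and is exactly equivalent to the paper's statement that $B_m(\rho)\leq 0$ in \eqref{eq:Bm}; your proof of \eqref{eq:21a} is complete (modulo the routine density/Parseval step needed to pass from finite spherical-harmonic sums to general $\phi\in H^{1/2}(\Gamma_R)$, which the paper carries out explicitly and you elide).

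The gap is that the crux of the lemma --- the mode-wise inequality for \eqref{eq:21b} --- is not proved: you only propose a Riccati-plus-barrier scheme (``one then checks that the source has a definite sign \dots and concludes $F_\ell\geq 0$ by a barrier argument'') without exhibiting the ODE for $F_\ell$, the sign of the source, or the barrier. This is not a formality, since your own computation shows the disk bound is saturated as $z\to 0^+$ at the critical value $\alpha=\max(1,n-2)$, so any barrier argument must handle an equality case at the endpoint. The paper closes this step differently and more economically: writing $\rho^{n-2}B_m(\rho)=A_\nu(\rho)+\tfrac{\alpha-(n-2)}{2}\bigl((2-n)M_\nu^2(\rho)+\rho\,\tfrac{\rd}{\rd\rho}M_\nu^2(\rho)\bigr)$ with $\nu=m+n/2-1$, it observes that the bracketed term is $\leq 0$ (both summands are nonpositive) and its prefactor is $\geq 0$ when $\alpha\geq n-2$, so everything reduces to the known inequality $A_\nu(t)\leq 0$ for $\nu\geq 1/2$ from \cite[Eq.~(2.7)]{CWMonk2008} --- which is itself the integrated form of $tM_\nu^2(t)\geq 2/\pi$, the very fact you cite --- plus a separate computation for the one exceptional mode $\nu=0$ (i.e.\ $n=2$, $\ell=0$), for which it invokes \cite[pp.~1436--1437]{CWMonk2008}. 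You correctly identified that this exceptional mode is where the condition $\alpha\geq 1$ (rather than $\alpha\geq n-2$) enters, but you did not supply the ``dedicated computation'' you acknowledge it requires. To complete your argument you should either carry out the Riccati/barrier analysis in full, or convert your disk bound into the decomposition above and cite (or reprove) $A_\nu\leq 0$ and the $\nu=0$ estimate.
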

\begin{proof} 
Since $\DTN:H^{1/2}(\Gamma_R)\to H^{-1/2}(\Gamma_R)$ and $\DTN:H^1(\Gamma_R)\to L^2(\Gamma_R)$ are continuous, it is enough to prove that these inequalities hold for all $\phi\in \cH_R$, for some $\cH_R\subset H^1(\Gamma_R)$ that is dense in $H^1(\Gamma_R)$ (and so also in $H^{1/2}(\Gamma_R)$). We will choose a set $\cH_R$ defined in terms of spherical harmonics, so first recall  relevant notations and results.

 For $m\in \N_0:= \N\cup\{0\}$, let $\cH^m(\mathbb{S}^{n-1})$ denote the set of spherical harmonics of degree $m$ on $\mathbb{S}^{n-1}$, as defined, e.g., in \cite[p.~252]{mclean2000strongly}. Note (see, e.g., \cite[Theorem 1.2.16]{levitin2023topics} and its proof for these details) that $L^2(\mathbb{S}^{n-1})$ has the orthogonal decomposition 
 $$
 L^2(\mathbb{S}^{n-1}) = \bigoplus_{m=0}^\infty \cH^m(\mathbb{S}^{n-1}),
 $$ 
 and the spherical harmonics are the eigenfunctions of $-\Delta_S$; precisely, each $\cH^m(\mathbb{S}^{n-1})$ is a finite-dimensional eigenspace
 corresponding to the eigenvalue $m(m+n-2)$. Thus every $\phi\in  L^2(\mathbb{S}^{n-1})$ has an expansion 
 \begin{equation} \label{eq:expan}
 \phi = \sum_{m=0}^\infty \phi_m \quad \mbox{with} \quad  \|\phi\|_{  L^2(\mathbb{S}^{n-1})}^2 = \sum_{m=0}^\infty \|\phi_m\|_{  L^2(\mathbb{S}^{n-1})}^2,
 \end{equation}
 where $\phi_m$ is the orthogonal projection of $u$ from $L^2(\mathbb{S}^{n-1})$ to $\cH^m(\mathbb{S}^{n-1})$. Let $\cH\subset L^2(\mathbb{S}^{n-1})$ denote the set of spherical harmonics, by which we mean the set of those $\phi\in L^2(\mathbb{S}^{n-1})$ that have a representation \eqref{eq:expan} with only finitely many terms, i.e.\ with $\phi_m=0$ for all but finitely many $m$. Clearly, $\cH$ is a dense subspace of $L^2(\mathbb{S}^{n-1})$. Let us show that $\cH$ is also dense in $H^1(\mathbb{S}^{n-1})$. 
 
 Let $(\cdot,\cdot)_S$ denote the inner product on  $L^2(\mathbb{S}^{n-1})$. 
 Recall (e.g., \cite[Eq.~(1.2.2)]{levitin2023topics}) that
 \begin{equation} \label{eq:div}
\int_{\mathbb{S}^{n-1}} \phi \Delta_S \psi \,\rd s = -  \int_{\mathbb{S}^{n-1}} \nabla_S\phi \cdot \nabla_S \psi \,\rd s, \qquad \psi\in H^2(\mathbb{S}^{n-1}), \quad \phi\in H^1(S).
 \end{equation}
 If  $\phi\in  H^1(\mathbb{S}^{n-1})$ has the expansion \eqref{eq:expan} then, by \eqref{eq:div}, for $i,j\in \N_0$,
 \begin{eqnarray} \nonumber
 \int_{\mathbb{S}^{n-1}} \nabla_S \phi_i\cdot \nabla_S \bar \psi_j\, \rd s&=&-(\Delta_S\phi_i,\phi_j)_S\\ \label{eq:or1}
 &=& i(i+n-2) (\phi_i,\phi_j)_S = i(i+n-2)\|\phi_i\|^2_{ L^2(\mathbb{S}^{n-1})}\delta_{ij},
 \end{eqnarray}
 where $\delta_{ij}$ is the Kronecker delta. In particular,
 \begin{equation} \label{eq:or2}
 \|\nabla_S \phi_m\|_{L^2(\mathbb{S}^{n-1})}^2 = m(m+n-2)\|\phi_m\|^2_{ L^2(\mathbb{S}^{n-1})}, \qquad m\in \N_0.
 \end{equation}
 Further, for $m\in \N_0$,
 \begin{eqnarray*}
  m(m+n-2)\|\phi_m\|^2_{L^2(\mathbb{S}^{n-1})}  &=& m(m+n-2) \left|(\phi_m,\phi)_S\right| = \left|(\Delta_S\phi_m,\phi)_S\right|\\ 
  &=& \left|\int_{\mathbb{S}^{n-1}} \nabla_S \phi_m\cdot \nabla_S \bar \phi\, \rd s\right|\\
  & \leq & \|\nabla_S \phi_m\|_{L^2(\mathbb{S}^{n-1})}\|\nabla_S \phi\|_{L^2(\mathbb{S}^{n-1})}\\& \leq &\sqrt{m(m+n-2)}\, \|\phi_m\|_{L^2(\mathbb{S}^{n-1})}\|\nabla_S \phi\|_{L^2(\mathbb{S}^{n-1})},
 \end{eqnarray*}
 so that
 \begin{equation} \label{eq:bd}
 \|\phi_m\|_{L^2(\mathbb{S}^{n-1})}  \leq m^{-1} \|\nabla_S \phi\|_{L^2(\mathbb{S}^{n-1})},\qquad m\in \N.
 \end{equation}
 It follows from \eqref{eq:or1}, \eqref{eq:or2}, and \eqref{eq:bd} 
 that the series \eqref{eq:expan} also converges in $H^1(\mathbb{S}^{n-1})$, so that $\cH$ is also dense in $H^1(\mathbb{S}^{n-1})$, and that
 \begin{equation} \label{eq:nabla2}
 \|\nabla_S \phi\|_{  L^2(\mathbb{S}^{n-1})}^2 = \sum_{m=0}^\infty \|\nabla_S \phi_m\|_{  L^2(\mathbb{S}^{n-1})}^2=m(m+n-2)\sum_{m=0}^\infty \|\phi_m\|_{  L^2(\mathbb{S}^{n-1})}^2.
 \end{equation}
 
 
For $\psi \in L^1(\Gamma_R)$ define $\widehat \psi\in L^1(\mathbb{S}^{n-1})$ by $\widehat \psi(\omega)=\psi(R\omega)$, $\omega\in \mathbb{S}^{n-1}$, and note that
\begin{equation} \label{eq:trans}
\int_{\Gamma_R}\psi \,\rd s = R^{n-1} \int_{\mathbb{S}^{n-1}}\widehat\psi \,\rd s, \qquad \psi\in L^1(\Gamma_R).
\end{equation}
Let $\cH_R:= \{\psi\in L^2 (\Gamma_R):\widehat \psi\in \cH\}$. The density of $\cH$ in $H^1(\mathbb{S}^{n-1})$ implies that $\cH_R\subset H^1(\Gamma_R)$ is dense in $H^1(\Gamma_R)$. We complete the proof of the lemma by showing that \eqref{eq:21a} and \eqref{eq:21b} hold for all $\phi\in \cH_R$.

 For $\nu\geq0$ let $J_\nu$ and $Y_\nu$ denote the usual Bessel functions of the first and second kinds of order $\nu$, and let $H_\nu^{(1)}:= J_\nu + \ri Y_\nu$, so that $H_\nu^{(1)}$ is the Hankel function of the first kind of order $\nu$. For $m\in \N_0$ and $t>0$, let
 $$
  j_m(t) = j_m(n,t) := \frac{J_{m+n/2-1}(t)}{t^{n/2-1}}, \quad  y_m(t) = y_m(n,t):= \frac{Y_{m+n/2-1}(t)}{t^{n/2-1}},
 $$
 so that $j_m$ and $y_m$ are spherical Bessel functions of the first and second kinds. (Our notation is that of \cite{mclean2000strongly} or (for the case $n=3$) \cite{Ned,NIST}, except that we omit a normalising factor $\sqrt{\pi/2}$.) For $m\in \N_0$ and $t>0$ define also the associated spherical Hankel function of the first kind,
 $$
 h_m(t) = h_m(n,t) := j_m(n,t)+\ri y_m(n,t) = \frac{H^{(1)}_{m+n/2-1}(t)}{t^{n/2-1}}.
 $$
 It is convenient to also use the notations (e.g., \cite[\S10.18]{NIST})
$$
M_\nu(t) := |H_\nu^{(1)}(t)|, \quad N_\nu(t) := \left|{H_\nu^{(1)}}^\prime(t)\right|, \qquad t>0, \quad \nu\geq 0.
$$ 
The arguments we make below use that $M_\nu$ is decreasing on $(0,\infty)$  for $\nu\geq 0$ \cite[\S13.74]{Watson}, so that also  $|h_m|$ is decreasing on $(0,\infty)$, for $m\in \N_0$.
If $\psi\in \cH^m(\mathbb{S}^{n-1})$, then $\psi(x/|x|)h_m(n,k|x|)$ is a solution of the Helmholtz equation in $\R^n\setminus\{0\}$ that satisfies the Sommerfeld radiation condition (see \cite[Lemma 9.3, p.~282]{mclean2000strongly}).

Suppose $\phi\in \cH_R$, so that $\widehat \phi\in \cH$, and let $\phi_m$ denote the component of $\widehat \phi$ in $\cH^m(\mathbb{S}^{n-1})$, for $m\in \N_0$, noting that only finitely many of the $\phi_m$ are non-zero. Then
\begin{equation} \label{eq:onr0}
\phi(x) = \sum_{m=0}^\infty \phi_{m}(x/|x|), \qquad x\in \Gamma_{R},
\end{equation} 
so that 
\begin{equation} \label{eq:onr3}
\nabla_T\phi(x) = R^{-1}\sum_{m=0}^\infty \nabla_S\phi_{m}(x/|x|), \qquad x\in \Gamma_{R}.
\end{equation} 
By inspection, the solution of the Dirichlet problem in $G_R$ with boundary data $\phi$ is, where $\rho:=kR$,
\begin{equation} \label{eq:onr}
u(x) = \sum_{m=0}^\infty \phi_{m}(x/|x|)h_m(k|x|)/h_m(\rho), \qquad x\in G_{R},
\end{equation}
so that
\begin{equation} \label{eq:onr2}
\DTN\phi(x) = k\sum_{m=0}^\infty \phi_{m}(x/|x|)h'_m(\rho)/h_m(\rho), \qquad x\in \Gamma_R.
\end{equation}
From \eqref{eq:trans}, \eqref{eq:onr0}, and \eqref{eq:onr2} we see that, where $c_m:= \|\phi_m\|_{  L^2(\mathbb{S}^{n-1})}^2/|h_m(\rho)|^2$,
\begin{eqnarray*}
\langle \DTN \phi,\phi\rangle &=& R^{n-2}\rho\sum_{m=0}^\infty c_m h'_m(\rho)\overline{h_m(\rho)}\\ & =& R^{n-2}\rho\sum_{m=0}^\infty c_m \left(\frac{1}{2}\frac{\rd}{\rd \rho}\left(|h_m(\rho)|^2\right)+\frac{2\ri}{\pi \rho^{n-1}}\right),
\end{eqnarray*}
where to obtain this last result we have use the Wronskian formula that $j_m(\rho)y'_m(\rho)-y_m(\rho) j'_m(\rho)=2\rho^{1-n}/\pi$ (e.g., \cite[Exercise~9.3]{mclean2000strongly}). This establishes \eqref{eq:21a} since $|h_m|$ is decreasing on $(0,\infty)$.

Similarly, using \eqref{eq:trans}, \eqref{eq:onr0}, \eqref{eq:onr3}, \eqref{eq:onr2}, and \eqref{eq:nabla2}, we see that
\begin{eqnarray*}
& & R\left(k^2\|\phi\|_{L^2(\Gamma_R)}^2 +  \|\DTN\phi\|_{L^2(\Gamma_R)}^2 - \|\nabla_T \phi\|_{L^2(\Gamma_R)}^2\right)\\ &=&  R^{n-2}\sum_{m=0}^\infty c_m\left((\rho^2-m(m+2-2))|h_m(\rho)|^2 +\rho^2|h'_m(\rho)|^2\right).
\end{eqnarray*}
Thus \eqref{eq:21b} holds for all $\phi\in \cH_R$ if and only if $B_m(\rho)\leq 0$ for $m\in \N_0$ and $\rho>0$, where
\begin{equation} \label{eq:Bm}
B_m(\rho) := (\rho^2-m(m+2-2))|h_m(\rho)|^2 +\rho^2|h'_m(\rho)|^2 + \frac{\alpha \rho}{2}\frac{\rd}{\rd \rho}\left(|h_m(\rho)|^2\right)-\frac{4}{\pi}\rho^{3-n}.
\end{equation}

To see that $B_m(\rho)\leq 0$ if $\alpha \geq \max(1,n-2)$, completing the proof, let $\nu = m+n/2-1$, $p=1-n/2$, so that $m(m+n-2) = \nu^2-p^2$, $h_m(\rho)=\rho^pH_\nu^{(1)}(\rho)$,
\begin{eqnarray*}
h_m'(\rho) &=&\rho^{p-1}(\rho{H_\nu^{(1)}}^\prime(\rho) + pH_\nu^{(1)}(\rho)),\\
\rho^n |h_m'(\rho)|^2 &=&  \rho^2N^2_\nu(\rho) + p^2 M^2_\nu(\rho)+p\rho\,\frac{\rd}{\rd \rho}(M_\nu^2(\rho)),
\end{eqnarray*}
and
$$
\frac{\rho^{n-1}}{2}\frac{\rd}{\rd \rho}(|h_m(\rho)|^2)  = pM_\nu^2(\rho) + \frac{\rho}{2}\frac{\rd}{\rd \rho}(M^2_\nu(\rho)).
$$
Using these identities it follows that
$$
\rho^{n-2}B_m(\rho) =  (\rho^2-\nu^2+2p^2+\alpha p)M^2_\nu(\rho) +\rho^2N_\nu^2(\rho) +\left(p+\frac{\alpha}{2}\right)\rho\,\frac{\rd}{\rd \rho}(M_\nu^2(\rho)) -\frac{4\rho}{\pi}, 
$$
for $m\in \N_0$ and $\rho>0$. For $\nu\geq 0$ and $t>0$, let
$$
A_\nu(t) :=M_\nu^2(t)(t^2-\nu^2)+t^2N_\nu^2(t)-4t/\pi.
$$
Then, for $m\in \N_0$ and $\rho>0$,
\begin{eqnarray*}
\rho^{n-2}B_m(\rho) &=&  A_\nu(\rho) +\frac{2-n+\alpha}{2}\left((2-n)M^2_\nu(\rho) +\rho \frac{\rd}{\rd \rho}(M_\nu^2(\rho))\right)\\ & \leq  & A_\nu(\rho) +\frac{2-n+\alpha}{2}\frac{\rd}{\rd \rho}(M_\nu^2(\rho)),
\end{eqnarray*}
if $\alpha\geq n-2$. Now it is shown in \cite[Equation (2.7)]{CWMonk2008} that $A_\nu(t)\leq 0$, for $\nu\geq 1/2$ and $t>0$. Since also $M_\nu$ is decreasing on $(0,\infty)$, it follows that $B_m(\rho)\leq 0$ for $m\in \N_0$ and $\rho>0$, if $\alpha\geq n-2$, as long as $n\geq 3$. In the case $n=2$,
$$
B_m(\rho) =  A_m(\rho) +\frac{\alpha\rho}{2}\frac{\rd}{\rd \rho}(M_m^2(\rho)),
$$
for $m\in \N_0$ and $\rho>0$, so that $\alpha\geq n-2=0$ implies that $B_m(\rho)\leq 0$ for $\rho>0$ if $m\in \N$. 
If $\alpha \geq 1$, then also
$$
B_0(\rho) =  A_0(\rho) +\frac{\alpha}{2}\frac{\rd}{\rd \rho}(M_0^2(\rho)) \leq A_0(\rho) +\frac{1}{2}\frac{\rd}{\rd \rho}(M_0^2(\rho)) \leq 0,
$$
for $\rho>0$, see \cite[pp.~1436--1437]{CWMonk2008}.
\end{proof}

\subsection*{Acknowledgements} The authors thank Euan Spence (Bath) for useful discussions, including drawing our attention to reference \cite{ChFr25}, and thank the anonymous referee and associate editor for helpful comments and feedback, that have fed, in particular, into the discussion before the proof of Theorem \ref{thm1} in \S\ref{sec:thm1} and into refinements of the proof of Theorem \ref{thm2v}. We acknowledge, in the preparation of this second revision of the manuscript, use of ChatGPT running GPT-5.5 to: i) assist in an additional  literature search, leading to the discovery of \cite{MM68}; ii) assist in checking the accuracy of the additional arguments and calculations that have been added in this second revision. The authors assume responsibility for all content.


\end{document}